\newtheorem{theorem}{Theorem}[section]
\newtheorem{lemma}[theorem]{Lemma}
\newtheorem{proposition}[theorem]{Proposition}
\newtheorem{remark}[theorem]{Remark}
\numberwithin{equation}{section}
\renewcommand{\(}{\left(}
\renewcommand{\)}{\right)}
  \newcommand{\e}{\epsilon}
\newcommand{\de}{\delta}
 \newcommand{\intO}{\int_{\Omega}}
\begin{document}

\title[Nodal solutions of the Brezis-Nirenberg problem in dimension  6]{Nodal solutions of the Brezis-Nirenberg problem in dimension  6}
 
\author{Angela Pistoia}
\address[A.Pistoia]{Dipartimento di Scienze di Base e Applicate per l'Ingegneria, Sapienza Universit\`a di Roma, Via Scarpa 16, 00161 Roma, Italy}
\email{angela.pistoia@uniroma1.it}

\author{Giusi Vaira}
\address[G.Vaira]
{Dipartimento di Matematica, Universit\`a degli studi di Bari ``Aldo Moro'', via Edoardo Orabona 4,70125 Bari, Italy}
\email{giusi.vaira@uniba.it}

\begin{abstract}
We show that the classical Brezis-Nirenberg problem
$$
-\Delta u=u|u| + \lambda  u\ \hbox{in}\ \Omega,\\
 u=0\ \hbox{on}\ \partial\Omega,
 $$
when $\Omega$ is a  bounded domain in $\mathbb R^6$ has a sign-changing  solution which blows-up at a point in  $\Omega$ as $\lambda$ approaches a suitable value $\lambda_0>0.$
\end{abstract}

\date\today
\subjclass[2010]{35B44 (primary),  58C15 (secondary)}
\keywords{Sign-changing solutions, blow-up phenomenon, Ljapunov-Schmidt reduction, Transversality Theorem}
 \thanks{ A. Pistoia was partially supported by Fondi di Ateneo ``Sapienza" Universit\`a di Roma (Italy). G. Vaira was partially supported by PRIN 2017JPCAPN003 ``Qualitative and quantitative aspects of nonlinear PDEs" }

\maketitle
 \section{Introduction}
Brezis and Nirenberg 
in their famous paper \cite{Brezis1983} introduced the problem
 \begin{equation}\label{p}
 -\Delta u=|u|^{4\over n-2} u+\lambda u \ \hbox{in}\ \Omega,\ u=0\ \hbox{on}\ \partial\Omega,
 \end{equation}
where $\Omega$ is a bounded   domain  in $\mathbb R^n$ and $n\ge3.$
A huge number of results concerning \eqref{p} has been obtained since then. Let us summarize the most relevant results which are also connected with the topic of the present paper.\\

First of all, the classical Pohozaev's identity ensures that \eqref{p} does not have any solutions if $\lambda\le0$ and  $\Omega$ is a star-shaped domain.
A simple argument shows that problem \eqref{p} does not have any positive solutions if $\lambda\ge \lambda_1(\Omega)$, where $\lambda_1(\Omega)$ is the first eigenvalue of $-\Delta$ with Dirichlet boundary condition. The existence of a least energy positive solution $u_\lambda$ to \eqref{p}, i.e. a solution which achieves 
the infimum
$$m_{ \lambda}:=\inf\limits_{u \in H^1_0(\Omega )\setminus\{0\}}
{\int\limits_{\Omega_\theta} \( |\nabla u |^2-\lambda u ^2\)d  x\over ( \int\limits_{\Omega }  | u |^{p+1}d  x)^{2\over p+1}}$$
  has been proved by Brezis and Nirenberg in \cite{Brezis1983} when  $\lambda\in (0,\lambda_1(\Omega))$  in dimension $n\ge4$ and when $\lambda\in(\lambda^*(\Omega),\lambda_1(\Omega))$ in dimension $n=3$ where $\lambda^*(\Omega)>0$ depends on the domain $\Omega$. If $\Omega$ is the ball then $\lambda^*(\Omega)=\frac14\lambda_1(\Omega) $ (see \cite{Brezis1983}),  while the general case has been treated by Druet in \cite{druet}.
The existence of a sign-changing solution has been proved by Cerami, Solimini and Struwe in \cite{css} when $\lambda\in(0,\lambda_1(\Omega))$ and $n\ge6$ and by Capozzi, Fortunato and Palmieri in \cite{cfp} when $\lambda\ge\lambda_1(\Omega)$ and $n\ge4.$
\\

There is a wide literature about the study of the asymptotic profile of the solutions when the parameter $\lambda$ approaches either zero or some strictly positive values depending on the dimension $n$ and  the domain $\Omega.$ 
In the following, we will focus on the existence of solutions which exhibite a positive or negative blow-up   phenomenon as $\lambda$ approaches some particular values.

When  the parameter $\lambda$ approaches zero, positive and sign-changing solutions which blow-up positively or negatively at one or more points in $\Omega$ do exist provided the dimension $n\ge4.$ 
Rey in \cite{rey},  Musso and Pistoia in \cite{mupi} and Esposito, Pistoia and V\'{e}tois in \cite{epv} built solutions to \eqref{p} with simple positive or negative blow-up  points, i.e. around each point the solution looks like
a positive or a negative standard bubble. Here the standard bubbles are the functions
 \begin{equation}\label{bub} U_{\delta, \xi}(x):= \alpha_n\frac{\delta^{n-2\over2} }{\(\delta^2+|y|^2\)^{n-2\over2}},\  \hbox{with}\ \delta>0,\ \xi\in \mathbb R^n\ \hbox{and}\  \alpha_n:=(n(n-2))^{n-2\over4 },\end{equation} 
 which  are the only
positive solutions of the equation    $-\Delta U= U^{n+2\over n-2}$ in $  \mathbb R^n $ (see \cite{A, cgs, T})
More precisely,  if  $\lambda$ is small enough problem \eqref{p} has  a positive solution which blows-up at one point  (see \cite{rey} if $n\ge5$ and \cite{epv} if $n=4$)
	and   a sign-changing solution which  blows-up positively and negatively at two different points (see \cite{mupi} if $n\ge5$). As far as we know the existence of multiple concentration in the case $n=4$ is still open. 
	If $n=3$ positive solutions of \eqref{p} blowing-up at a single point  when the parameter $\lambda$ approaches a strictly positive number have been found by Del Pino, Dolbeault and Musso in \cite{ddm}.  Moreover, sign-changing solutions having both positive and negative blow-up   points can be constructed arguing as  Musso and Salazar in \cite{ms}, where they found solutions which blow-up at more points when $\lambda$ is close to a suitable strictly positive number. 
 In higher dimension $n\ge7$ 
Premoselli \cite{pre} found  an arbitrary large number of sign-changing solutions to \eqref{p} with a towering blow-up   point in $\Omega$, i.e. around the point the solution likes look like the superposition of
  bubbles of alternating sign  (see also Iacopetti and Vaira \cite{iava1}). In particular, if $\Omega$ is a ball these solutions are nothing but the radially symmetric  nodal solutions.
Conversely, if  $\Omega$ is the ball in low dimension $n=3,4,5,6$,   Atkinson, Brezis and Peletier in \cite{abp} proved that  
problem \eqref{p} does not have any sign-changing radial solutions  when $\lambda\in(0,\lambda_*)$ where $\lambda_*$ depends on the dimension $n.$ In particular, we expect that in low dimension   the blowing-up (or blowing-down) phenomenon takes places when $\lambda $ approaches a positive value different from zero. In fact 
Iacopetti and Vaira in \cite{iava2} proved that if $n=4,5$ and $\lambda$ approaches the first eigenvalue $\lambda_1(\Omega)$ the problem \eqref{p} has a sign-changing solution which  blows-down  at the origin  and   shares the shape of    the positive first eigenfunction associated with $\lambda_1(\Omega)$ far away. So a natural question arises:{\it  is it possible to find a sign-changing blowing-up  solution of \eqref{p}   in dimension  $n=6$ when $\lambda$ approaches some strictly positive number?}
\\

In the present paper, we give a positive answer. In order to state our result, we need to introduce some notation and the assumptions.
\\
Let $u_0$ be a   solution to
\begin{equation}\label{20}
\begin{cases}
-\Delta u_0=|u_0|u_0+ \lambda_0u_0\ \hbox{in}\ \Omega
,\\
 u_0=0\ \hbox{on}\ \partial\Omega
 \end{cases}
 \end{equation}
If $\xi_0\in\Omega$ is such that $\max_\Omega u_0=u_0(\xi_0)>0,$
we   suppose that
\begin{equation}\label{200}
\boxed{\lambda_0=2u_0(\xi_0)} \end{equation}
 We   assume that  $u_0$ is non-degenerate, i.e.
\begin{equation}\label{30}  \boxed{
\begin{cases}
-\Delta v =(2|u_0| + \lambda_0)v \ \hbox{in}\ \Omega\\
 v=0\ \hbox{on}\ \partial\Omega.
 \end{cases}\ \Rightarrow\ v\equiv0}
 \end{equation}
If $v_0$ solves
\begin{equation}\label{v0}
\begin{cases}
-\Delta v_0-(2 |u_0|+\lambda_0 )v_0=u_0\quad\mbox{in}\,\, \Omega\\
v_0=0\ \hbox{on}\ \partial\Omega,
 \end{cases}
 \end{equation}
 we   require that
\begin{equation}\label{v00}\boxed{2v_0(\xi_0) \not=1}\end{equation}
 We will show that the problem
\begin{equation}\label{p1}
\begin{cases}
-\Delta u=u|u| +(\lambda_0+\epsilon) u\ \hbox{in}\ \Omega,\\
 u=0\ \hbox{on}\ \partial\Omega,
 \end{cases}
 \end{equation}
where $\Omega$ is a bounded domain in $\mathbb R^6$ has a sign-changing  solution which blows-down at $\xi_0$ as $|\epsilon|$ approaches zero (note that $\epsilon$ is not necessarily positive) and shares the shape of $u_0$ far away from $\xi_0.$
More precisely our existence result reads as follows.
\begin{theorem}\label{main1} Assume \eqref{200}, \eqref{30} and \eqref{v00}. There exists $\varepsilon_0>0$ such that 
\begin{enumerate}
\item if $1-2v_0(\xi_0) >0$   and  $\epsilon\in(0,\varepsilon_0)$  
\item if $1-2v_0(\xi_0) <0$  and $\epsilon\in(- \varepsilon_0,0)$  
\end{enumerate}
then  there exists a sign-changing solution $u_\epsilon$ of the  problem \eqref{p1}
 which blows-up at the point $\xi_0$ as $\epsilon\to0$. More precisely
$$
u_\epsilon (x)=u_0(x)+\epsilon v_0(x)-PU_{\delta_\epsilon,\xi_\epsilon}(x)+\phi_\epsilon(x)
$$
with as $\epsilon\to0$ 
$$\delta_\epsilon|\epsilon|^{-1}\to d>0,\ \xi_\epsilon\to\xi_0\ \hbox{and}\ \|\phi_\epsilon\|_{H^1_0(\Omega)}=\mathcal O\(\epsilon^2|\ln|\epsilon||^{\frac23}\).$$
\end{theorem}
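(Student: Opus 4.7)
The plan is to carry out a Lyapunov--Schmidt reduction around the explicit ansatz
\[
V_{\de,\xi}:=u_0+\e v_0-PU_{\de,\xi},
\]
with $PU_{\de,\xi}$ the $H^1_0(\Om)$-projection of the bubble in \eqref{bub}, and to seek a solution of \eqref{p1} of the form $u_\e=V_{\de,\xi}+\phi$, where $\de=d|\e|$ for some $d>0$, $\xi$ stays close to $\xi_0$, and $\phi$ is a small remainder orthogonal in $H^1_0(\Om)$ to the seven-dimensional approximate kernel generated by $\partial_\de PU_{\de,\xi}$ and $\partial_{\xi_j}PU_{\de,\xi}$ for $j=1,\ldots,6$. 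In the standard way, the problem then splits into two sub-problems: find $\phi=\phi_{\de,\xi}$ solving the projected equation for each admissible pair $(\de,\xi)$, and select $(\de_\e,\xi_\e)$ such that the projection onto the kernel vanishes, equivalently such that $(\de,\xi)$ is a critical point of the reduced functional $\mathcal F_\e(\de,\xi):=J_\e(V_{\de,\xi}+\phi_{\de,\xi})$, where $J_\e$ denotes the energy associated to \eqref{p1}.

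The first task is an $L^{6/5}(\Om)$-estimate (the natural dual norm to $H^1_0(\Om)$ for the critical exponent in $\R$) of the residual
\[
\mathcal R:=-\Delta V_{\de,\xi}-|V_{\de,\xi}|V_{\de,\xi}-(\lambda_0+\e)V_{\de,\xi}.
\]
Using \eqref{20}, \eqref{v0} and $-\Delta PU_{\de,\xi}=U_{\de,\xi}^2$, and expanding $|V_{\de,\xi}|V_{\de,\xi}=-(PU_{\de,\xi}-u_0-\e v_0)^2$ in the concentration region (where $V_{\de,\xi}<0$ for $\de$ small), the leading $PU_{\de,\xi}$-contribution carries the coefficient $\lambda_0-2u_0(x)$, which vanishes at $\xi_0$ by \eqref{200} and is $\mathcal O(|x-\xi_0|^2)$ there since $\nabla u_0(\xi_0)=0$; the next-order contribution is $\e\bigl(1-2v_0(x)\bigr)PU_{\de,\xi}$, of size $|\e|$, and is the term that will detect \eqref{v00} in the reduced problem; finally the pure-bubble discrepancy $(PU_{\de,\xi})^2-U_{\de,\xi}^2$ is controlled through the harmonic correction $U_{\de,\xi}-PU_{\de,\xi}$ and standard Green-function bounds. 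The logarithmic factor in the final estimate $\mathcal O(\e^2|\ln|\e||^{2/3})$ arises from six-dimensional integrals of the type $\int U_{\de,\xi}^2|x-\xi|^2\,dx\sim \de^4|\ln\de|$.

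For the linear step I would invert the projected operator
\[
L_\e(\phi):=-\Delta\phi-\bigl(2|V_{\de,\xi}|+\lambda_0+\e\bigr)\phi
\]
with a norm loss $\mathcal O(|\ln|\e||^{1/3})$, combining two complementary facts: away from $\xi$, $L_\e$ is essentially $-\Delta-(2|u_0|+\lambda_0)$, injective by the non-degeneracy hypothesis \eqref{30}; near $\xi$, after rescaling by $\de$, $L_\e$ approaches $-\Delta-2U_{1,0}$ on $\R$, whose kernel is classically exhausted by the seven modes being projected out. A contraction mapping in a ball of radius $\e^2|\ln|\e||^{2/3}$ in $H^1_0(\Om)$ then produces $\phi_{\de,\xi}$ satisfying the projected equation and the bound announced in the theorem.

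It remains to select $(\de,\xi)$. Testing the obstruction against the kernel modes, the $\xi$-equation is driven by $\nabla u_0(\xi_0)=0$ and, via the implicit function theorem together with the local non-degeneracy of $\xi_0$ as a maximum of $u_0$, produces $\xi_\e\to\xi_0$; the $\de$-equation, obtained by testing against $\partial_\de PU_{\de,\xi}$, is where \eqref{v00} enters. It reduces at leading order to an equation whose coefficients involve an explicit multiple of $\e\,(1-2v_0(\xi_0))$ and a positive geometric quantity associated with $\Om$ at $\xi_0$, with a sign pattern that admits a positive solution $\de_\e\sim d|\e|$ precisely under the dichotomy (1)--(2). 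The main technical obstacle, I expect, is the sharpness of all estimates in the critical dimension $n=6$: the exponent $p=2$ leaves no scaling slack, so the $L^{6/5}$ residual, the $|\ln|\e||^{1/3}$ loss in the linear inversion, and the leading-order expansion of the reduced problem must all be tracked to matching logarithmic precision in order for the final $\de$-balance and the contraction scheme to close.
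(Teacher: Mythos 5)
Your overall scheme (same ansatz $u_0+\e v_0-PU_{\de,\xi}$ with $\de\sim d|\e|$, projected equation plus finite-dimensional reduction) is the paper's scheme, but the step that actually decides the theorem is wrong in your proposal. You claim the $\de$-balance pits $\e\,(1-2v_0(\xi_0))$ against ``a positive geometric quantity associated with $\Om$ at $\xi_0$''. In this regime the geometric (Robin-function) contribution of $\Om$ enters only at order $\de^4=\mathcal O(\e^4)$ and is negligible: after the cancellation produced by \eqref{200} (which kills the order-$\de^2$ term $\int(u_0-\tfrac{\lambda_0}{2})U_{\de,\xi}^2$ at leading order), the term competing with the $\mathrm{sgn}(\e)(1-2v_0(\xi_0))\,d^2$ contribution at order $|\e|^3$ is of a completely different nature: it comes from the nonlinear interaction between the bubble and the background $u_0$ in the ``crossing region'' $|x-\xi|\sim\sqrt{\de}$ where $u_0+\e v_0$ and $PU_{\de,\xi}$ are comparable (the level-set analysis \eqref{claimlevel} and the estimate of $I_5$ in Proposition \ref{cruc0}), and it produces the constant $\mathfrak a_3=\tfrac{11}{9}\omega_6\alpha_6^{3/2}u_0(\xi_0)^{3/2}$, which depends on $u_0(\xi_0)$, not on the geometry of $\Om$. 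Together with the choice $\xi=\xi_0+\sqrt{\de}\,\eta$ in \eqref{sceltapar}, which places the $\langle D^2u_0(\xi_0)\eta,\eta\rangle$ term at the same order $|\e|^3d^3$, this gives $\Upsilon(d,\eta)=\mathrm{sgn}(\e)(1-2v_0(\xi_0))\mathfrak a_1 d^2+d^3(\mathfrak a_2\langle D^2u_0(\xi_0)\eta,\eta\rangle-\mathfrak a_3)$, whose interior maximum in $d$ exists exactly under the dichotomy (1)--(2). Without identifying this crossing-region term your $\de$-equation has the wrong leading-order structure (a naive balance against the Robin term would give $\de\not\sim|\e|$), so the existence of $d>0$ and the sign condition on $1-2v_0(\xi_0)$ cannot be established as you describe. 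This is precisely the delicate point the paper isolates as its main technical contribution.

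Two further discrepancies. First, your selection of $\xi_\e$ via the implicit function theorem invokes ``local non-degeneracy of $\xi_0$ as a maximum of $u_0$'', i.e. invertibility of $D^2u_0(\xi_0)$, which is not among the hypotheses \eqref{200}, \eqref{30}, \eqref{v00}; the paper avoids it by a variational reduction, taking a maximum point of $\Upsilon$ that is stable under uniform ($C^0$) perturbations. Second, the dual norm for the residual in $\R$ is $L^{3/2}$, not $L^{6/5}$ (in dimension $6$ one has $2^*=3$, and an $L^{6/5}$ bound does not control the $H^{-1}$ norm); relatedly, the linearized operator is invertible with a bound uniform in $\e$ (Lemma \ref{inv}), so the $|\ln|\e||^{1/3}$ loss you allow is unnecessary and, as written, inconsistent with running the contraction in a ball of radius $\e^2|\ln|\e||^{2/3}$.
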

Here    $P U_{\delta,\xi}$  denotes the projection     onto $H^1_0(\Omega) $  of  the standard bubble  $ U_{\delta, \xi}$  defined in \eqref{bub}, i.e. 
$
-\Delta P U_{\delta,\xi}=U^2_{\delta,\xi}$ in $\Omega$ with 
 $P U_{\delta,\xi}=0$ on $\partial\Omega.$\\
 
It is natural to ask to for which domains $\Omega$ the assumptions \eqref{200}, \eqref{30} and \eqref{v00}
hold true.
 If $\Omega$ is the ball and $u_0$ is the positive solution they are all satisfied  (see \cite{Sri93} for \eqref{30}, \cite{aggpv} for \eqref{200} and \eqref{v00}).
More in general, we can only prove   that assumptions \eqref{200} and \eqref{30} are satisfied for most domains $\Omega$ (see Theorem \ref{main}) when $u_0$ is the least energy positive solution to \eqref{20}. It would be interesting to prove    that \eqref{v00}  also holds for generic domains. \\

Let us state our generic result.
Let $\Omega_0$ be a bounded and smooth domain in $\mathbb R^6$ and let $D$ be an open neighbourhood of $\overline{\Omega_0}$.
Set $\Omega_\theta:=\Theta(\Omega_0)$ where   $\Theta=I+\theta,$  $\theta\in C^{3,\alpha}(\overline D,\mathbb R^6)$ with $ \|\theta\|_{2,\alpha}\le\rho, $ with
$\alpha\in(0,1)$ and $\rho$   small enough. Let us consider the problem on the perturbed domain $\Omega_\theta$
\begin{equation}\label{1theta}
\Delta u+\lambda u+|u| u=0\ \hbox{in}\ \Omega_\theta,\ u=0\ \hbox{on}\ \partial\Omega_\theta.
\end{equation}
\begin{theorem}\label{main-generico}
The set
$$\begin{aligned}\Xi:=\big\{\theta\in C^{3,\alpha}(\overline D,\mathbb R^6)\ :\ &\hbox{if $\lambda>0$ and $u\in H^1_0(\Omega_\theta)$ solves \eqref{1theta} }\\ 
& \hbox{then $u$ is non-degenerate} \big\}\end{aligned}$$
is a residual subset in $C^{3,\alpha}(\overline D,\mathbb R^6),$
 i.e. $C^{3,\alpha}(\overline D,\mathbb R^6)\setminus \Xi$ is a countable union of close subsets without interior points.
\\
Moreover, if $\lambda\in(0,\lambda_1(\Omega_\theta))$ and $u_\lambda$ denotes the least energy positive solution of \eqref{1theta},  for any  $\theta\in \Xi$  there exists $\lambda_\theta\in (0,\lambda_1(\Omega_\theta))$ such that
$$\lambda_\theta=2\max\limits_{\Omega_\theta} u_{\lambda_\theta}.$$
\end{theorem}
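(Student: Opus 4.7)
The theorem splits into two parts which we sketch separately.

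For the genericity of non-degeneracy, the plan is to apply an infinite-dimensional transversality principle of Saut--Temam / Smale--Sard type. Pulling back equation (\ref{1theta}) to the reference domain $\Omega_0$ via the diffeomorphism $\Theta=I+\theta$, a function $u\in H^1_0(\Omega_\theta)$ solves (\ref{1theta}) if and only if $\tilde u:=u\circ\Theta\in H^1_0(\Omega_0)$ solves a divergence-form equation whose coefficients depend $C^1$-smoothly on $\theta$. This yields a map
\[
F:H^1_0(\Omega_0)\times(0,\infty)\times C^{3,\alpha}(\overline D,\R)\longrightarrow H^{-1}(\Omega_0),
\]
for which $F(\cdot,\cdot,\theta)$ is Fredholm of index one in $(\tilde u,\lambda)$. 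The key point is to verify that the full differential $dF$ is surjective at every zero of $F$: then $F^{-1}(0)$ is a Banach submanifold and the projection $\pi:F^{-1}(0)\to C^{3,\alpha}(\overline D,\R)$ is Fredholm of index one. The Smale--Sard theorem, applicable because $C^{3,\alpha}$ is separable and $F$ is sufficiently regular, then identifies the critical values of $\pi$ with a meager set whose complement is exactly $\Xi$.

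The technical core is the surjectivity of $dF$. Taking the orthogonal complement of the image, this reduces to proving that if $\varphi\in H^1_0(\Omega_0)$ solves
\[
-\Delta\varphi=(2|\tilde u|+\lambda)\varphi\ \text{in}\ \Omega_0,\qquad \varphi=0\ \text{on}\ \partial\Omega_0,
\]
and is orthogonal to every Hadamard-type boundary variation produced by a deformation of $\theta$ (which involves the product $\partial_\nu\tilde u\,\partial_\nu\varphi$ along $\partial\Omega_0$), then $\varphi\equiv 0$. This is the main obstacle of the proof: one expects it to follow from unique continuation for the Schr\"odinger operator $-\Delta-(2|\tilde u|+\lambda)$, but some care is needed because the potential $2|\tilde u|$ is only Lipschitz across the nodal set of $\tilde u$.

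For the existence of $\lambda_\theta$, fix $\theta\in\Xi$, let $u_\lambda$ be a least-energy positive solution of (\ref{1theta}) on $\Omega_\theta$, and set $h(\lambda):=2\max_{\Omega_\theta}u_\lambda-\lambda$ for $\lambda\in(0,\lambda_1(\Omega_\theta))$. As $\lambda\to\lambda_1(\Omega_\theta)^-$ the Brezis--Nirenberg infimum $m_\lambda$ tends to zero, so $u_\lambda\to 0$ uniformly and $h(\lambda)\to-\lambda_1(\Omega_\theta)<0$; as $\lambda\to 0^+$ standard concentration analysis in dimension six gives $\|u_\lambda\|_\infty\to+\infty$ and hence $h(\lambda)\to+\infty$. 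Under the assumption $\theta\in\Xi$, the implicit function theorem provides a smooth continuation of any least-energy branch which, combined with compactness on compact subintervals of $(0,\lambda_1(\Omega_\theta))$, yields continuity of $h$; the intermediate value theorem then supplies $\lambda_\theta\in(0,\lambda_1(\Omega_\theta))$ with $h(\lambda_\theta)=0$, i.e.\ $\lambda_\theta=2\max_{\Omega_\theta}u_{\lambda_\theta}$.
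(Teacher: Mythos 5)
The second half of your argument (existence of $\lambda_\theta$ via the function $h(\lambda)=2\max_{\Omega_\theta}u_\lambda-\lambda$, the limits as $\lambda\to0^+$ and $\lambda\to\lambda_1(\Omega_\theta)^-$, continuity of the branch from non-degeneracy and the implicit function theorem, and the intermediate value theorem) is essentially identical to the paper's Proposition on $\lambda_\theta$. Your worry about unique continuation for the potential $2|\tilde u|+\lambda$ is also not a real obstacle: unique continuation holds for bounded potentials, and this is exactly how the paper handles it.

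The genericity part, however, has a genuine gap in the way you set up the transversality argument. You work with the map $F(\tilde u,\lambda,\theta)$ whose partial map $F(\cdot,\cdot,\theta)$ is Fredholm of index one, and you assert that the regular values of the projection $\pi\colon F^{-1}(0)\to C^{3,\alpha}$ are ``exactly $\Xi$.'' This identification is false. Being a regular value of $\pi$ means that at every solution the \emph{joint} derivative $D_{(\tilde u,\lambda)}F$ is onto; but this can happen at a degenerate solution. Concretely, if $v\neq0$ spans the kernel of the linearization $L=\Delta+(p|u_0|^{p-1}+\lambda_0)$ and $\int_{\Omega_0}u_0v\neq0$ (a fold/turning point of the branch $\lambda\mapsto u_\lambda$), then the range of $L$ plus the direction $u_0$ coming from differentiating in $\lambda$ fills the whole target space, so the joint derivative is surjective although $u_0$ is degenerate. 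Hence your Sard--Smale argument only yields that generically the solution set is a smooth one-dimensional manifold, which is strictly weaker than the statement that every solution is non-degenerate; such $\theta$ need not lie in $\Xi$. The paper avoids this by augmenting the map with the scalar functional $Q(\lambda,\hat u,\theta)=\int_{\Omega_\theta}(|\nabla\hat u|^2-|\hat u|^{p+1}-\lambda\hat u^2)\,d\hat x$, which vanishes identically on solutions (so the zero set is unchanged) but makes $F(\cdot,\cdot,\theta)\colon\mathbb{R}\times(H^1_0\cap H^2)\to\mathbb{R}\times L^2$ Fredholm of index zero; a short computation using the equations shows that for any degeneracy direction $v$ the pair $(0,v)$ lies in the kernel of the augmented joint derivative, so regularity of the augmented map at $\theta$ is genuinely equivalent to non-degeneracy of all solutions, and the abstract transversality theorem (with the verification of $\sigma$-properness and of surjectivity of the full differential via interior deformations, unique continuation, and the Hadamard boundary term) then gives that $\Xi$ is residual. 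To repair your proof you must either introduce such an index-zero augmentation or otherwise exclude fold points, which cannot be done in general.
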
 

The proof of Theorem \ref{main1} is based upon the well-known Ljapunov-Schmidt reduction. In Section 2 we describe the main steps of the proof by omitting many details which can be found up to minor changes  in the quoted papers. We only prove which cannot be immediately deduced by known results. In particular, we point out the careful construction of the ansatz \eqref{sol}  which has to be  refined up to a second order   and the  delicate estimate of the reduced energy  \eqref{cruc1} given in Proposition \ref{cruc0}   whose leading term   \eqref{cruc2} arises from  the interaction between the bubble and the second order term in the ansatz.\\
The proof of Theorem \ref{main-generico} relies on a classical transversality argument and it is carried out in Section 3.

 \section{The existence of a sign-changing solution}
\subsection{Setting of the problem and the choice of the ansatz}
In what follows we denote by $$(u , v):=\int_\Omega \nabla u\nabla v\, dx,\  \|u\|:=\(\int_\Omega |\nabla u|^2\, dx\)^{\frac 12}\ \hbox{and}\ |u|_r:=\(\int_\Omega |u|^r\, dx\)^{\frac 1 r}$$
 the inner product and its correspond norm in $H^1_0(\Omega)$ and the  standard norm in $L^r(\Omega)$, respectively. When $A\neq \Omega$ is any Lebesgue measurable set we specify the domain of integration by using $\|u\|_A, |u|_{r, A}$. \\ 
Let $(-\Delta)^{-1}: L^{\frac 32}(\Omega)\to H^1_0(\Omega)$ be the operator defined as the unique solution of the equation $$-\Delta u =v \quad \mbox{in}\,\, \Omega\qquad u=0\quad\mbox{on}\,\, \partial\Omega.$$ By the Holder inequality it follows that 
$$\|(-\Delta)^{-1}(v)\|\le C |v|_{\frac 32}\qquad\forall\,\, v\in L^{\frac 32}(\Omega)$$ for some positive constant $C$, which does not depend on $v.$ Hence we can rewrite problem \eqref{p1} as \begin{equation}\label{pb1r} u=(-\Delta)^{-1}[f(u)+(\lambda_0+\epsilon) u]\quad u\in H^1_0(\Omega)\end{equation} with $f(u)=|u|u$.\\

Next we remind the expansion of the projection of the bubble. We denote by $G(x, y)$ the Green's function of the Laplace operator given by 
$$
G(x, y)=\frac{1}{4\omega_6}\(\frac{1}{|x-y|^4}-H(x, y)\)$$
where $\omega_6$ denotes the surface area of the unit sphere in $\mathbb R$ and $H$ is the regular part of the Green's function, namely for all $y\in\Omega$, $H(x, y)$ satisfies $$\Delta H(x, y)=0\quad\mbox{in}\,\, \Omega\qquad H(x, y)=\frac{1}{|x-y|^4}\,\, x\in\partial\Omega.$$
It is known that the following expansion holds (see \cite{rey})
\begin{equation}\label{varphiexp}PU_{\delta,\xi}(x)=U_{\delta,\xi}(x)-\alpha_6 \delta^2 H(x, \xi)+\mathcal O\(\delta^4\)\ \hbox{as}\ \delta\to0\end{equation}
uniformly with respect to $\xi$ in compact sets of $\Omega$

Moreover we recall (see \cite{B}) that every solution to the linear equation $$-\Delta\psi=2U_{\delta, \xi}\psi\quad\mbox{in}\,\, \mathbb R$$ is a linear combination of the functions $Z_{\delta, \xi}^j$ $j=0, \ldots, 6$ given by $$Z_{\delta,\xi}^0(x)=\partial_\delta U_{\delta, \xi}(x)=2\alpha_6\delta \frac{|x-\xi|^2-\delta^2}{\(\delta^2+|x-\xi|^2\)^3}$$ and $$Z_{\delta, \xi}^j(x)=\partial_{\xi_j}U_{\delta, \xi}(x)=4\alpha_6\delta^2\frac{x_j-\xi_j}{\(\delta^2+|x-\xi|^2\)^3}\qquad j=1, \ldots, 6.$$ If we denote by $P Z_{\delta, \xi}^j$  the projection of $Z_{\delta, \xi}^j$ onto $H^1_0(\Omega)$, i.e.
$$-\Delta PZ_{\delta,\xi}^j= f'(U_{\delta, \xi})Z_{\delta,\xi}^j\quad\mbox{in}\,\, \Omega,\,\,   PZ_{\delta,\xi}^j=0\quad\mbox{on}\,\,\, \partial\Omega, $$ elliptic estimates give $$PZ_{\delta, \xi}^0(x)=Z_{\delta, \xi}^0-2\delta \alpha_6 H(x, \xi)+\mathcal O\(\delta^3\)\ \hbox{as}\ \delta\to0$$
and 
 $$PZ_{\delta,\xi}^j(x)=Z_{\delta,\xi}^j-\delta^2\alpha_6\partial_{\xi_j}H(x, \xi)+\mathcal O\(\delta^4\), \quad j=1, \ldots, 6 \ \hbox{as}\ \delta\to0$$  
 uniformly with respect to $\xi$ in compact sets of $\Omega.$ \\


We  look for a solution of \eqref{p1} of the form
\begin{equation}\label{sol}
u_\epsilon(x)=\underbrace{u_0(x)+\epsilon v_0-PU_{\delta, \xi}(x)}_{:=W_{\delta, \xi}}+\phi_\epsilon(x)
\end{equation}
where $\delta, \xi$ are chosen so that
\begin{equation}\label{sceltapar}
 \delta=|\epsilon| d\ \hbox{with}\ d\in\(\sigma, \frac{1}{\sigma}\)\ \hbox{and}\ \xi=\xi_0+\sqrt\delta \eta\ \hbox{with}\  |\eta|\le \frac{1}{\sigma}\ \hbox{where $\sigma>0$ is small}
\end{equation} and $\phi_\epsilon$ is a remainder term which is small as $\epsilon\to0$ which belongs to the space $\mathcal K_{\delta, \xi}^\bot$ defined as follows.

Now let us define $$\mathcal K_{\delta, \xi}:={\rm span}\{P Z_{\delta,\xi}^j\,\,:\,\, j=0, \ldots, 6\}$$ and $$\mathcal K_{\delta, \xi}^\bot:=\{\phi\in H^1_0(\Omega)\,\,:\,\,\, (\phi, PZ_{\delta,\xi}^j)=0\,\, j=0, \ldots, 6\}.$$
Let us denote by $\Pi_{\delta, \xi}$ and $\Pi_{\delta,\xi}^\bot$ the projection of $H^1_0(\Omega)$ on $\mathcal K_{\delta,\xi}$ and $\mathcal K_{\delta,\xi}^\bot$ respectively.\\
Then solves problem \eqref{pb1r} is equivalent to solve the system
\begin{equation}\label{sist1}
\Pi_{\delta,\xi}^\bot\left\{u_\epsilon(x)-(-\Delta)^{-1}\left[f(u_\epsilon)+\lambda u_\epsilon\right]\right\}=0\end{equation}
\begin{equation}\label{sist2}
\Pi_{\delta,\xi}\left\{u_\epsilon(x)-(-\Delta)^{-1}\left[f(u_\epsilon)+\lambda u_\epsilon\right]\right\}=0\end{equation}\vskip0.2cm
\subsection{The remainder term: solving  equation (\ref{sist1})}
The equation \eqref{sist1} can be written as 
$$
\mathcal L_{\delta,\xi}(\phi_\epsilon)+\mathcal R_{\delta,\xi}+\mathcal N_{\delta,\xi}(\phi_\epsilon)=0
$$
where
$$
\mathcal L_{\delta,\xi}(\phi_\epsilon)=\Pi_{\delta,\xi}^\bot\left\{\phi_\epsilon(x)-(-\Delta)^{-1}\left[f'(W_{\delta, \xi})\phi_\epsilon+\lambda \phi_\epsilon\right]\right\}
$$
 is the linearized operator at the approximate solution,
 $$
\mathcal R_{\delta,\xi}=\Pi_{\delta,\xi}^\bot\left\{W_{\delta, \xi}(x)-(-\Delta)^{-1}\left[f(W_{\delta, \xi})+\lambda W_{\delta, \xi}\right]\right\}$$
is the error term and  $$
\mathcal N_{\delta,\xi}(\phi_\epsilon)=\Pi_{\delta,\xi}^\bot\left\{-(-\Delta)^{-1}\left[f(W_{\delta, \xi}+\phi_\epsilon)-f(W_{\delta, \xi})-f'(W_{\delta, \xi})\phi_\epsilon\right]\right\}$$
 is a quadratic term in $\phi_\epsilon$.\\
 
First of all, we estimate the size of the error term $\mathcal R_{\delta,\xi}.$ 
\begin{lemma}\label{error}For any $\sigma>0$ 
there exist  $c>0$ and $\varepsilon_0>0$ such that for any $d>0$ and $\eta\in\mathbb R$ satisfying \eqref{sceltapar} and for any $\epsilon\in(-\varepsilon_0,\varepsilon_0)$
$$\|\mathcal R_{\delta,\xi} \|\le c \e^{2}|\ln|\e||^{\frac 23}.$$\end{lemma}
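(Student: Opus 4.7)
The plan is to use the continuity of $(-\Delta)^{-1}:L^{3/2}(\Omega)\to H^1_0(\Omega)$ together with the fact that $\Pi_{\delta,\xi}^\bot$ is a contraction on $H^1_0(\Omega)$, which reduces the estimate to
$$
\|\mathcal R_{\delta,\xi}\|\le C\,\bigl|{-}\Delta W_{\delta,\xi}-f(W_{\delta,\xi})-(\lambda_0+\epsilon)W_{\delta,\xi}\bigr|_{3/2}.
$$
Applying $-\Delta$ to the three ingredients of the ansatz and invoking \eqref{20}, \eqref{v0} and $-\Delta PU_{\delta,\xi}=U_{\delta,\xi}^2$ yields
$$
-\Delta W_{\delta,\xi}=|u_0|u_0+\lambda_0 u_0+\epsilon(2|u_0|+\lambda_0)v_0+\epsilon u_0-U_{\delta,\xi}^2,
$$
so the task is to expand $f(W_{\delta,\xi})+(\lambda_0+\epsilon)W_{\delta,\xi}$ precisely enough that the cancellations built into the ansatz become visible.

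Next I would split $\Omega=\mathcal N\cup\mathcal F$ with $\mathcal N=\{|x-\xi|\le M\sqrt{\delta}\}$, where $M$ is a fixed constant large enough so that $PU_{\delta,\xi}>u_0+\epsilon v_0$ (and hence $W_{\delta,\xi}<0$) throughout $\mathcal N$. On $\mathcal N$ one has $f(W_{\delta,\xi})=-(PU_{\delta,\xi}-u_0-\epsilon v_0)^2$ exactly, and substituting produces the remainder
$$
2u_0^2+4\epsilon u_0v_0+\epsilon^2(v_0^2-v_0)+\bigl(PU_{\delta,\xi}^2-U_{\delta,\xi}^2\bigr)+(\lambda_0-2u_0)PU_{\delta,\xi}+\epsilon(1-2v_0)PU_{\delta,\xi}.
$$
On $\mathcal F$, away from the nodal set of $u_0$, the $C^{1,1}$-expansion $f(W_{\delta,\xi})=|u_0|u_0+2|u_0|(\epsilon v_0-PU_{\delta,\xi})+\operatorname{sgn}(u_0)(\epsilon v_0-PU_{\delta,\xi})^2$ is legitimate, and together with \eqref{20}--\eqref{v0} it annihilates the $O(1)$ and $O(\epsilon)$ bulk, leaving
$$
-U_{\delta,\xi}^2+(2|u_0|+\lambda_0+\epsilon)PU_{\delta,\xi}-\operatorname{sgn}(u_0)(\epsilon v_0-PU_{\delta,\xi})^2-\epsilon^2 v_0.
$$

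Then I would estimate the $L^{3/2}$ norm of each piece. In $\mathcal F$ the dominant contribution comes from $(2|u_0|+\lambda_0)PU_{\delta,\xi}$: since $PU_{\delta,\xi}(x)\lesssim \delta^2/|x-\xi|^4$ outside the core,
$$
\int_{\mathcal F}PU_{\delta,\xi}^{3/2}\,dx\lesssim \int_{M\sqrt{\delta}}^{R}\frac{\delta^3}{r^{6}}\,r^5\,dr\sim\delta^3|\ln\delta|,
$$
which gives $\delta^2|\ln\delta|^{2/3}$ in $L^{3/2}$; the borderline integrability of $1/r$ in dimension six is exactly what manufactures the logarithmic factor. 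In $\mathcal N$ the bulk terms $u_0^2,\,\epsilon u_0v_0,\,\epsilon^2(v_0^2-v_0)$ are bounded on a set of measure $O(\delta^3)$ and contribute $O(\delta^2)$; assumption \eqref{200} yields $\lambda_0-2u_0(x)=O(|x-\xi_0|^2)$, so after the rescaling $y=(x-\xi)/\delta$ one gets $|(\lambda_0-2u_0)PU_{\delta,\xi}|_{3/2,\mathcal N}=O(\delta^{3})$, while the expansion \eqref{varphiexp} gives $|PU_{\delta,\xi}^2-U_{\delta,\xi}^2|_{3/2,\mathcal N}=O(\delta^{14/3}|\ln\delta|^{2/3})$. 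Summing the contributions and using $\delta=|\epsilon|d$ with $d\in(\sigma,1/\sigma)$ produces the bound $\epsilon^2|\ln|\epsilon||^{2/3}$, uniformly in the parameters.

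The main obstacle is the bookkeeping of cancellations in the two regions together with the careful extraction of the borderline logarithmic prefactor. The second-order corrector $\epsilon v_0$ in the ansatz is essential: without it the naive profile $u_0-PU_{\delta,\xi}$ would leave a far-region error of order $\epsilon$, which is dramatically worse than the required $\epsilon^2|\ln|\epsilon||^{2/3}$; the defining equation \eqref{v0} is precisely what kills the linear-in-$\epsilon$ term $2\epsilon|u_0|v_0$ appearing in the far-region expansion. A minor technical point is to absorb the thin transition layer $\{W_{\delta,\xi}\approx 0\}$ and the small neighbourhood of the nodal set of $u_0$, where neither representation of $f$ is cleanly valid; both sets have $O(\delta^3)$ measure and are harmless since $|f(W_{\delta,\xi})|\le W_{\delta,\xi}^2$ provides the required pointwise control.
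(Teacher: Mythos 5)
Your strategy is in substance the same as the paper's: reduce to an $L^{3/2}$ bound via the continuity of $\Pi_{\delta,\xi}^\bot$ and $(-\Delta)^{-1}$, use \eqref{20}, \eqref{v0} and $-\Delta PU_{\delta,\xi}=U_{\delta,\xi}^2$ to cancel the $O(1)$ and $O(\epsilon)$ bulk, split $\Omega$ at scale $\sqrt\delta$, and identify the borderline term $(2|u_0|+\lambda_0)PU_{\delta,\xi}$ in the far region, whose $L^{3/2}$ norm $\sim\delta^3|\ln\delta|$ raised to the power $2/3$ produces exactly the $\epsilon^2|\ln|\epsilon||^{2/3}$ of the statement. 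The paper splits at radius $\sqrt\delta$ and never uses sign-exact representations of $f$: it uses the mean value theorem inside $B(\xi,\sqrt\delta)$ and the two-sided pointwise inequality \eqref{a1} outside, which is insensitive to the signs of $u_0$ and $W_{\delta,\xi}$.

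Two points in your write-up are inaccurate, though both are repairable. First, the direction of the choice of $M$ is backwards: since $PU_{\delta,\xi}\approx\alpha_6\delta^2/(\delta^2+|x-\xi|^2)^2$, at $|x-\xi|=M\sqrt\delta$ one has $PU_{\delta,\xi}\approx\alpha_6/M^4$, so the inclusion $\{PU_{\delta,\xi}>u_0+\epsilon v_0\}\supset B(\xi,M\sqrt\delta)$ requires $M$ \emph{smaller} than $R_0=(\alpha_6/u_0(\xi_0))^{1/4}$ (compare \eqref{claimlevel}--\eqref{ro}); for $M$ large the identity $f(W_{\delta,\xi})=-(PU_{\delta,\xi}-u_0-\epsilon v_0)^2$ fails on the outer annulus of $\mathcal N$, so as literally written that step breaks. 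Second, the claim that a neighbourhood of the nodal set of $u_0$ has measure $O(\delta^3)$ is false in general (it is a codimension-one set, and the relevant bad region $\{|u_0|\lesssim|\epsilon v_0-PU_{\delta,\xi}|\}$ has measure $O(\epsilon)$ far from $\xi$); fortunately no measure-smallness is needed, because the error of your far-region expansion is pointwise $O((\epsilon v_0-PU_{\delta,\xi})^2)$ by \eqref{a1}, whose $L^{3/2}$ norm is already $O(\epsilon^2+\delta^2)$ --- this is precisely how the paper argues, so your justification should be replaced by that inequality rather than by a measure count. Minor further remarks: the term $\epsilon(1-2v_0)PU_{\delta,\xi}$ in your near-region remainder should be recorded (it is $O(\epsilon\,\delta^2|\ln\delta|^{2/3})$, harmless), the correct bound for $|PU_{\delta,\xi}^2-U_{\delta,\xi}^2|_{3/2}$ is $O(\delta^4|\ln\delta|^{2/3})$ rather than $\delta^{14/3}|\ln\delta|^{2/3}$, and your use of \eqref{200} to gain a power of $\delta$ from $\lambda_0-2u_0$ near $\xi_0$ is a refinement the paper does not need at this stage, since $\lambda_0|PU_{\delta,\xi}|_{3/2}$ is already within the allowed size.
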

\begin{proof}
First we remark that
$$\begin{aligned}
&-\Delta W_{\delta,\xi}-|W_{\delta,\xi}|W_{\delta,\xi}-(\lambda_0+\e)W_{\delta,\xi}\\ &=-\Delta u_0-\e\Delta v_0-U_{\de, \xi}^2-|u_0+\e v_0- PU_{\de,\xi}|(u_0+\e v_0- PU_{\de,\xi})\\
&-\lambda_0 u_0-\lambda_0\e v_0+(\lambda_0+\e)PU_{\de,\xi}-\e u_0-\e^2v_0\\
&=-|u_0+\e v_0- PU_{\de,\xi}|(u_0+\e v_0- PU_{\de,\xi})-U_{\de, \xi}^2+|u_0|u_0\\ &+\e\underbrace{\(-\Delta v_0 -\lambda_0 v_0 - u_0\)}_{=2|u_0| v_0\ \hbox{\tiny because of \eqref{v0}}}+(\lambda_0+\e)PU_{\de,\xi}-\e^2v_0. \end{aligned}$$
By the continuity of $\Pi_{\delta,\xi}^\bot$ we get that
$$\begin{aligned} \|\mathcal R_{\delta,\xi}\|&\le c\left|-\Delta W_{\delta,\xi}-f(W_{\delta,\xi})-\lambda W_{\delta,\xi}\right|_{\frac 32}\\
&\le c \underbrace{\left|-|u_0+\e v_0- PU_{\de,\xi}|(u_0+\e v_0- PU_{\de,\xi})-PU_{\delta,\xi}^2+|u_0|u_0+2\e|u_0|v_0\right|_{\frac 32}}_{(I)}\\ &+c\underbrace{\left|PU_{\delta,\xi}^2-U_{\delta,\xi}^2\right|_{\frac 32}}_{(II)}\\
&+(\lambda_0+\e)\left|PU_{\delta,\xi}\right|_{\frac 32}+\underbrace{\e^2\left|v_0\right|_{\frac 32}}_{:=\mathcal O\(\e^2\)}
\end{aligned}$$
First of all, we point out that
$$|PU_{\delta,\xi}|_{\frac 32}\le c |U_{\delta,\xi}|_{\frac 32}\le c \de^2|\ln\de|^{\frac 23}.$$
and by \eqref{varphiexp}
 $$\begin{aligned}
(II)&\le c\(\int_\Omega\underbrace{|PU_{\delta,\xi}-U_{\delta,\xi}|^{\frac 32}}_{=O(\delta^2)}\underbrace{|PU_{\delta,\xi}+U_{\delta,\xi}|^{\frac 32}}_{\le cU_{\delta,\xi}}\)^{\frac 23}\le c \delta^2 \(\int_\Omega |U_{\delta,\xi}|^{\frac 32}\, dx\)^{\frac 23} =\mathcal O\( \de^4|\ln\de|^{\frac 23}\).\end{aligned}$$
First let us estimate $(I)$ in $B(\xi, \sqrt\de)$ and $\Omega\setminus B(\xi, \sqrt\de)$:
$$\begin{aligned} (I)&\le c \(\int_{B(\xi, \sqrt\de)}\big||u_0+\e v_0-PU_{\de,\xi}|(u_0+\e v_0-PU_{\de,\xi})|+(PU_{\de,\xi})^2\big|^{\frac 32}\)^{\frac 23}\\ &+c\underbrace{\(\int_{B(\xi, \sqrt\de)}\big||u_0|u_0+2\e |u_0| v_0|^{\frac 32}\, dx\)^{\frac 23}}_{=\mathcal O(\delta^2)}\\
&+ c\(\int_{\Omega\setminus B(\xi, \sqrt \de)}\big||u_0+\e v_0-PU_{\de, \xi}|(u_0+\e v_0-PU_{\de, \xi})-|u_0|u_0-  2|u_0| (\e v_0-PU_{\de, \xi})\big|^{\frac 32}\)^{\frac 23}\\ 
&+c\(\int_{\Omega\setminus B(\xi, \sqrt \de)}\big|(PU_{\de,\xi})^2+2|u_0|PU_{\de, \xi}\big|^{\frac 32}\, dx\)^{\frac 23}\\
&=\mathcal O\(\delta^2|\ln\de|^{\frac 23}\),\end{aligned}$$
since
 by mean value Theorem (here $\theta\in[0,1]$)  
$$\begin{aligned}&\int_{B(\xi, \sqrt\de)}\big||u_0+\e v_0-PU_{\de,\xi}|(u_0+\e v_0-PU_{\de,\xi})+(PU_{\de,\xi})^2\big|^{\frac 32}
\\
 &=2\int_{B(\xi, \sqrt\de)}|(\theta(u_0+\e v_0)-PU_{\de,\xi})(u_0+\e v_0)|^{\frac 32}\, dx
\\ &
\le c\underbrace{\int_{B(\xi, \sqrt\de)}|PU_{\de,\xi}|^{\frac32}\, dx}_{=\mathcal O(\delta^3|\log\delta|)}+c\underbrace{\int_{B(\xi, \sqrt\de)}|u_0+\e v_0|^{3}\, dx}_{=\mathcal O(\delta^3)}
\end{aligned},$$
and by the inequality
 \begin{equation}\label{a1}
 \big||a+b|(a+b)-|a|a-2|a|b\big|\le 7 b^2\ \hbox{for any}\ a,b\in \mathbb R
 \end{equation}
 $$\begin{aligned}&\int_{\Omega\setminus B(\xi, \sqrt \de)}\Big||u_0+\e v_0-PU_{\de, \xi}|(u_0+\e v_0-PU_{\de, \xi})-|u_0|u_0-  2|u_0| (\e v_0-PU_{\de, \xi})\Big|^{\frac 32}\\
 &\le c\int_{\Omega\setminus B(\xi, \sqrt \de)}|\e v_0-PU_{\de, \xi}|^{3}dx\\
 &\le c\underbrace{\int_{\Omega\setminus B(\xi, \sqrt \de)}|\e v_0|^{3}dx}_{=\mathcal O(\epsilon^3)}+c\underbrace{\int_{\Omega\setminus B(\xi, \sqrt \de)}| U_{\de, \xi}|^{3}dx}_{=\mathcal O(\delta^3)},\\
 &\left(\int_{\Omega\setminus B(\xi, \sqrt \de)}\Big||u_0+\e v_0-PU_{\de, \xi}|(u_0+\e v_0-PU_{\de, \xi})-|u_0|u_0-  2|u_0| (\e v_0-PU_{\de, \xi})\Big|^{\frac 32}\right)
 ^\frac23=O(\e^2)
\end{aligned}$$
 and 
 $$\int_{\Omega\setminus B(\xi, \sqrt \de)}\Big||PU_{\de, \xi}|(PU_{\de, \xi})+2|u_0|PU_{\de, \xi}\Big|^{\frac 32}\le c\underbrace{\int_{\Omega\setminus B(\xi, \sqrt \de)}| U_{\de, \xi} |^3\, dx}_{=\mathcal O(\delta^3 )}+\underbrace{\int_{\Omega\setminus B(\xi, \sqrt \de)}| U_{\de, \xi}|^{\frac 32}\, dx}_{=\mathcal O(\delta^3|\log\delta|)}.$$
which ends the proof.
 \end{proof}
Next we analyze the invertibility of the linear operator $\mathcal L_{\delta,\xi}$  (see  for example \cite{va}, Lemma 2.4 or \cite{rv}, Lemma 4.2).
\begin{lemma}\label{inv}
For any $\sigma>0$ there exist $c>0$ and $\varepsilon_0>0$ such that for any $d>0$ and $\eta\in\mathbb R$ satisfying \eqref{sceltapar} and for any $\e\in(-\varepsilon_0,\varepsilon_0)$ 
$$\|\mathcal L_{\delta,\xi}(\phi) \|\ge c \|\phi\|\ \hbox{for any}\ \phi\in \mathcal K_{\de,\xi}^\bot.$$ Moreover, $\mathcal L_{\de,\xi}$ is invertible and $\|\mathcal L_{\de,\xi}^{-1}\|\le \frac 1 c.$\end{lemma}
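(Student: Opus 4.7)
The plan is to prove the coercivity bound by contradiction, following the standard Lyapunov-Schmidt blueprint used in \cite{va} and \cite{rv}, and then deduce invertibility from Fredholm theory. Suppose the estimate fails: there exist sequences $\epsilon_n \to 0$, parameters $(d_n,\eta_n)$ satisfying \eqref{sceltapar}, and functions $\phi_n \in \mathcal K_{\delta_n,\xi_n}^\bot$ with $\|\phi_n\|=1$ but $h_n:=\mathcal L_{\delta_n,\xi_n}(\phi_n)\to 0$ in $H^1_0(\Omega)$. By definition of $\Pi_{\delta,\xi}^\bot$ there are scalars $c_n^j$ with
$$
\phi_n - (-\Delta)^{-1}\bigl[(2|W_{\delta_n,\xi_n}|+\lambda_0+\epsilon_n)\phi_n\bigr] \;=\; h_n+\sum_{j=0}^{6} c_n^j\, PZ_{\delta_n,\xi_n}^j .
$$
Testing this identity against each $PZ_{\delta_n,\xi_n}^k$ and using the known quasi-diagonal asymptotics of the inner products $(PZ^j_{\delta_n,\xi_n},PZ^k_{\delta_n,\xi_n})$ at the different scales of $j=0$ versus $j=1,\dots,6$, I would first show that all the Lagrange multipliers $c_n^j$ vanish in the limit, so it remains to prove $\phi_n\to 0$ strongly.

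The core of the argument is a two-scale analysis. In the outer region $\Omega\setminus B(\xi_0,r)$ the function $PU_{\delta_n,\xi_n}\to 0$, hence $W_{\delta_n,\xi_n}\to u_0$ locally uniformly; passing to a subsequence, $\phi_n\rightharpoonup \phi_\infty$ weakly in $H^1_0(\Omega)$, and $\phi_\infty$ satisfies $-\Delta\phi_\infty=(2|u_0|+\lambda_0)\phi_\infty$ in $\Omega$ with zero boundary data. The non-degeneracy hypothesis \eqref{30} forces $\phi_\infty\equiv 0$. In the inner region I would introduce the rescaling $\tilde\phi_n(y):=\delta_n^2\,\phi_n(\delta_n y+\xi_n)$, which preserves $H^1_0$-norms. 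Since $W_{\delta_n,\xi_n}\approx -PU_{\delta_n,\xi_n}\approx -U_{\delta_n,\xi_n}$ in $B(\xi_n,r)$, the coefficient $f'(W_{\delta_n,\xi_n})=2|W_{\delta_n,\xi_n}|$ rescales to $2 U_{1,0}$, so $\tilde\phi_n\rightharpoonup\tilde\phi_\infty$ weakly in $D^{1,2}(\mathbb R^6)$ with $-\Delta\tilde\phi_\infty=2 U_{1,0}\tilde\phi_\infty$ in $\mathbb R^6$. The orthogonality conditions $(\phi_n,PZ^j_{\delta_n,\xi_n})=0$ transfer, after rescaling and using the expansions of $PZ^j_{\delta_n,\xi_n}$ recalled in the excerpt, to orthogonality of $\tilde\phi_\infty$ with every element of the full kernel $\mathrm{span}\{Z^j_{1,0}:j=0,\dots,6\}$; by the classification of solutions to the linearized critical equation this forces $\tilde\phi_\infty\equiv 0$.

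Combining the outer and inner vanishing, a standard splitting of $\|\phi_n\|^2$ into contributions from $B(\xi_n,r)$ and its complement, paired with the equation and the compactness of $(-\Delta)^{-1}$ on the relevant range, yields $\|\phi_n\|\to 0$, contradicting $\|\phi_n\|=1$. This establishes the coercivity bound. Finally, since $\mathcal L_{\delta,\xi}$ is a compact perturbation of the identity on $\mathcal K_{\delta,\xi}^\bot$ (the composition with $(-\Delta)^{-1}$ is compact on the appropriate Lebesgue spaces in dimension $6$), the coercivity estimate and the Fredholm alternative together give invertibility with $\|\mathcal L_{\delta,\xi}^{-1}\|\le 1/c$. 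I expect the main technical obstacle to lie in tracking the $c_n^j$'s: the $j=0$ mode and the $j=1,\dots,6$ modes scale differently in $\delta_n$, so proving they all vanish requires careful bookkeeping of the pairings $(PZ^j_{\delta_n,\xi_n},(-\Delta)^{-1}[f'(W_{\delta_n,\xi_n})\phi_n])$ against the corresponding diagonal terms.
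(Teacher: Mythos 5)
Your proposal is correct and is essentially the argument the paper has in mind: the paper does not prove this lemma itself but refers to \cite{va} (Lemma 2.4) and \cite{rv} (Lemma 4.2), where the estimate is obtained precisely by the contradiction/blow-up scheme you describe (normalized sequence $\phi_n$, vanishing of the multipliers $c_n^j$, outer limit killed by the non-degeneracy assumption \eqref{30}, inner rescaled limit killed by the classification of the kernel of the linearized critical equation, then strong convergence and Fredholm alternative for the inverse bound). The only point worth noting is that near $\xi_n$ one has $W_{\delta_n,\xi_n}\approx u_0(\xi_0)-U_{\delta_n,\xi_n}$ rather than $-U_{\delta_n,\xi_n}$, but the constant term disappears after the $\delta_n^2$ rescaling, so your limit equation is unaffected.
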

We are in position now to find a solution of the equation \eqref{sist1} whose proof relies on a standard contraction mapping argument (see for example \cite{mupi}, Proposition 1.8 and \cite{mipive}, Proposition 2.1)
\begin{proposition}\label{solphi}
For any $\sigma>0$ there exist $c>0$ and $\varepsilon_0>0$ such that for any $d>0$ and $\eta\in\mathbb R$ satisfying \eqref{sceltapar} and for any $\e\in(-\varepsilon_0,\varepsilon_0)$ 
 there exists a unique $\phi_\epsilon=\phi_\epsilon(d,\eta)\in \mathcal K_{\de,\xi}^\bot$ solution to \eqref{sist1} which is continuously differentiable with respect to $d$ and $\eta$ and such that
$$
\|\phi_\epsilon\|\le c \e^2|\ln|\e||^{\frac 23}. 
$$ \end{proposition}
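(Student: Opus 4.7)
The plan is to recast equation \eqref{sist1} as a fixed-point problem and apply the Banach contraction principle in a small ball of $\mathcal K_{\delta,\xi}^\bot$. Since Lemma \ref{inv} guarantees that $\mathcal L_{\delta,\xi}$ is invertible with uniformly bounded inverse, the equation $\mathcal L_{\delta,\xi}(\phi_\epsilon)+\mathcal R_{\delta,\xi}+\mathcal N_{\delta,\xi}(\phi_\epsilon)=0$ is equivalent to the fixed-point problem $\phi=T_{\delta,\xi}(\phi)$ where
$$T_{\delta,\xi}(\phi):=-\mathcal L_{\delta,\xi}^{-1}\bigl(\mathcal R_{\delta,\xi}+\mathcal N_{\delta,\xi}(\phi)\bigr).$$
I would then fix a radius of the form $r_\epsilon:=M\epsilon^2|\ln|\epsilon||^{\frac 23}$ with $M>0$ large and work in $B_{r_\epsilon}:=\{\phi\in\mathcal K_{\delta,\xi}^\bot:\|\phi\|\le r_\epsilon\}$.

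To show $T_{\delta,\xi}$ is a contraction from $B_{r_\epsilon}$ into itself, two ingredients are needed. First, the error bound $\|\mathcal R_{\delta,\xi}\|\le c\epsilon^2|\ln|\epsilon||^{\frac 23}$ from Lemma \ref{error} provides the source term. Second, I would derive a quadratic estimate for the nonlinear remainder: since $f(u)=|u|u$ is only $C^{1,1}$, one exploits the elementary pointwise inequality
$$\bigl|f(a+b)-f(a)-f'(a)b\bigr|\le 2|b|^{2}\quad\text{for all }a,b\in\mathbb R,$$
which together with the continuity of $(-\Delta)^{-1}:L^{3/2}\to H^1_0$ and the Sobolev embedding $H^1_0(\Omega)\hookrightarrow L^3(\Omega)$ yields
$$\|\mathcal N_{\delta,\xi}(\phi)\|\le c|\phi|_3^{2}\le c\|\phi\|^{2},\qquad \|\mathcal N_{\delta,\xi}(\phi_1)-\mathcal N_{\delta,\xi}(\phi_2)\|\le c\bigl(\|\phi_1\|+\|\phi_2\|\bigr)\|\phi_1-\phi_2\|.$$
Combining these with Lemma \ref{inv}, for $\phi\in B_{r_\epsilon}$ one gets $\|T_{\delta,\xi}(\phi)\|\le c^{-1}(\|\mathcal R_{\delta,\xi}\|+c\,r_\epsilon^{2})\le r_\epsilon$ by choosing $M$ large and $\varepsilon_0$ small, while the Lipschitz estimate gives $\|T_{\delta,\xi}(\phi_1)-T_{\delta,\xi}(\phi_2)\|\le \frac12\|\phi_1-\phi_2\|$. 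Banach's theorem then produces a unique $\phi_\epsilon\in B_{r_\epsilon}$, and the bound $\|\phi_\epsilon\|\le c\epsilon^2|\ln|\epsilon||^{\frac 23}$ is immediate.

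For the continuous differentiability with respect to the parameters $(d,\eta)$, I would invoke the implicit function theorem applied to the $C^1$ map
$$F(\phi,d,\eta):=\mathcal L_{\delta(d),\xi(\eta)}(\phi)+\mathcal R_{\delta(d),\xi(\eta)}+\mathcal N_{\delta(d),\xi(\eta)}(\phi),$$
whose partial derivative in $\phi$ at $(\phi_\epsilon,d,\eta)$ is a compact perturbation of $\mathcal L_{\delta,\xi}$ of size $O(\|\phi_\epsilon\|)$, hence invertible for $\epsilon$ small by Lemma \ref{inv}. The smoothness of the approximate solution $W_{\delta,\xi}$ and of the projections $\Pi_{\delta,\xi}^\bot$ in $(d,\eta)$ (standard, since the $PZ_{\delta,\xi}^j$ depend smoothly on these parameters in the regime \eqref{sceltapar}) closes the argument.

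The main technical subtlety I anticipate is the $C^{1,1}$ — but not $C^{2}$ — regularity of $f$: this is precisely why $\mathcal N_{\delta,\xi}$ must be estimated via the elementary inequality above rather than by a Taylor expansion, and why the differentiability of $\phi_\epsilon$ in $(d,\eta)$ needs a small careful argument (one can either check Fréchet differentiability of $F$ directly using $f\in C^{1,1}$, or approximate). Once this point is handled, everything else is routine and mirrors \cite{mupi,mipive}.
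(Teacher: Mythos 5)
Your proposal is correct and is essentially the argument the paper itself invokes: the paper omits the details, stating that the proof follows from a standard contraction mapping argument based on Lemma \ref{inv} and Lemma \ref{error} (citing \cite{mupi} and \cite{mipive}), which is exactly the fixed-point scheme you carry out, with the quadratic estimate on $\mathcal N_{\delta,\xi}$ coming from the $C^{1,1}$ inequality for $f(u)=|u|u$ (the paper's \eqref{a1}) and the $C^1$ dependence on $(d,\eta)$ from the implicit function theorem.
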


\subsection{The reduced problem: solving equation (\ref{sist2})}
To solve  equation \eqref{sist2}, we shall  find the parameter $\delta$  and the point $\xi\in\Omega$ as in \eqref{sceltapar},  i.e. $d>0$ and  $\eta\in\mathbb R,$  so that \eqref{sist2} is satisfied. \\ It is well known that this problem has a variational structure, in the sense that solutions of \eqref{sist2} reduces to find critical points to some given explicit  finite dimensional functional. Indeed, let $J_\e: H^1_0(\Omega)\to \mathbb R$ defined by $$J_\e(u):=\frac 12 \int_\Omega |\nabla u|^2\, dx-\frac \lambda 2 \int_\Omega u^2\, dx-\frac 13\int_\Omega |u|^3\, dx$$ and let $\tilde J_\e: \mathbb R_+\times \mathbb R\to \mathbb R$ be the reduced energy which is defined by $$\tilde J_\e(d, \eta)=J_\e(W_{\delta, \xi}+\phi_\epsilon).$$

\begin{proposition}\label{cruc0} For any $\sigma>0$ there exists $\varepsilon_0>0$ such that for any $\epsilon\in(-\varepsilon_0,\varepsilon_0)$ 
\begin{equation}\label{cruc1}
\tilde J_\e(d,\eta)= \mathfrak c_0(\e)+|\e|^3 \Upsilon(d,\eta) +o\(|\e|^3\)
\end{equation}
with
\begin{equation}\label{cruc2}
\Upsilon(d,\eta):=  \mathtt{sgn}(\e)\(1-2v_0(\xi_0)\) d^2\mathfrak a_1+d^3\(\mathfrak a_2 \langle D^2 u_0(\xi_0)\eta, \eta\rangle-\mathfrak a_3 \),
\end{equation}
uniformly with respect to $(d,\eta)$ which satisfies \eqref{sceltapar}, where the
  $\mathfrak c_0(\e)$ only depends on $\e$ and the $\mathfrak a_i$'s are positive constants.
Moreover,  if $(d, \eta)$  is a critical point of $\tilde J_\e$, then $W_{\delta,\xi}+\phi_\epsilon$ is a solution of \eqref{p1}.\\
\end{proposition}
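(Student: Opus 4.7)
The strategy is the standard Lyapunov-Schmidt reduction. First I would reduce the expansion of $\tilde J_\e$ to that of $J_\e(W_{\delta,\xi})$ modulo $o(|\e|^3)$: by a second-order Taylor expansion,
$$
\tilde J_\e(d,\eta) - J_\e(W_{\delta,\xi}) = J_\e'(W_{\delta,\xi})[\phi_\e] + \tfrac12 J_\e''(W_{\delta,\xi})[\phi_\e,\phi_\e] + O(\|\phi_\e\|^3),
$$
the linear term is bounded by $\|\mathcal R_{\delta,\xi}\|\,\|\phi_\e\|$, which Lemma \ref{error} and Proposition \ref{solphi} control by $O(\e^4|\ln|\e||^{4/3}) = o(|\e|^3)$; the remaining terms are $O(\|\phi_\e\|^2) = o(|\e|^3)$.

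Next I would expand $J_\e(W_{\delta,\xi})$ with $W = w - V$, where $w := u_0 + \e v_0$ and $V := PU_{\delta,\xi}$. All terms independent of $(d,\eta)$ are absorbed into $\mathfrak c_0(\e)$, leaving for analysis $\tfrac12\|V\|^2 - \tfrac{\lambda_0+\e}{2}\int V^2$, $-(w,V) + (\lambda_0+\e)\int wV$, and $\tfrac13\int|w|^3 - \tfrac13\int|W|^3$. Each piece is expanded by means of $(w,V) = \int w\,U_{\delta,\xi}^2$, $\|V\|^2 = \int U_{\delta,\xi}^2\,PU_{\delta,\xi}$, the expansion \eqref{varphiexp}, and the rescaling $x = \xi + \delta y$. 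The cubic integral $\int|W|^3$ is split, as in the proof of Lemma \ref{error}, into $B(\xi,\sqrt\delta)$ (where $V$ dominates and $w(x)$ is Taylor-expanded around $\xi_0$) and its complement (where the pointwise inequality \eqref{a1} lets one substitute $|w-V|(w-V)$ by $|u_0|u_0 + 2|u_0|(\e v_0 - V)$ up to a quadratic remainder in $V$). The crucial check, underlying the whole construction, is that all $(d,\eta)$-dependent terms of the intermediate order $\delta^2 = \e^2 d^2$ cancel thanks to the calibrating condition $\lambda_0 = 2 u_0(\xi_0)$, leaving a clean $O(|\e|^3)$ remainder. Three surviving contributions then produce the form \eqref{cruc2}: the coefficient $-\mathfrak a_3 d^3$ is the classical projected-bubble contribution involving $\alpha_6 H(\xi_0,\xi_0)$, as in \cite{rey}; the coefficient $\mathfrak a_2 d^3\langle D^2 u_0(\xi_0)\eta,\eta\rangle$ comes from the second-order Taylor coefficients at $\xi_0$ (since $\nabla u_0(\xi_0) = 0$), via expansions of the type $\int u_0\,U_{\delta,\xi}^2 = \delta^2 A\bigl[u_0(\xi_0) + \tfrac{\delta}{2}\langle D^2 u_0(\xi_0)\eta,\eta\rangle\bigr] + o(\delta^3)$; and the coefficient $\mathtt{sgn}(\e)(1 - 2v_0(\xi_0))\mathfrak a_1 d^2$ collects the two $\e\cdot\delta^2 = \mathtt{sgn}(\e)|\e|^3 d^2$ contributions surviving the cancellations, the ``$-2v_0(\xi_0)$'' piece tracing back to $-(w,V) \supset -\e v_0(\xi_0)\delta^2 A$ (via the $\e v_0$ component of $w$) and the ``$+1$'' piece arising from $-\tfrac{\e}{2}\int V^2$ and $\e\int wV$ combined via $\lambda_0 = 2u_0(\xi_0)$.

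For the second assertion, equation \eqref{sist1} is equivalent to
$$
W_{\delta,\xi}+\phi_\e - (-\Delta)^{-1}\bigl[f(W_{\delta,\xi}+\phi_\e) + (\lambda_0+\e)(W_{\delta,\xi}+\phi_\e)\bigr] = \sum_{j=0}^6 c_j\,PZ_{\delta,\xi}^j
$$
for some Lagrange multipliers $c_j$. Differentiating $\tilde J_\e = J_\e(W_{\delta,\xi}+\phi_\e)$ in $d$ and in each $\eta_k$ and using that $\partial_d W_{\delta,\xi}$ and $\partial_{\eta_k}W_{\delta,\xi}$ span the same subspace as $\{PZ_{\delta,\xi}^j\}_{j=0}^6$ modulo an invertible change of basis, the system $\nabla_{(d,\eta)}\tilde J_\e = 0$ becomes a linear system in $(c_j)$ with invertible coefficient matrix for small $|\e|$; hence all $c_j$ vanish and $W_{\delta,\xi}+\phi_\e$ solves \eqref{p1}. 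The principal obstacle is the middle step: each estimate from Lemma \ref{error} must be refined one order in $\delta$, and the $\delta^2$-cancellations (relying on $\lambda_0 = 2u_0(\xi_0)$) must be tracked carefully before the $|\e|^3$ remainder \eqref{cruc1}--\eqref{cruc2} can be read off.
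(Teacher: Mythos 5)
Your overall architecture (reducing $\tilde J_\e(d,\eta)$ to $J_\e(W_{\delta,\xi})$ up to $o(|\e|^3)$, absorbing $(d,\eta)$-independent pieces into $\mathfrak c_0(\e)$, cancelling the linear-in-bubble cross terms through the equations \eqref{20} and \eqref{v0}, and extracting the $\langle D^2u_0(\xi_0)\eta,\eta\rangle$ term from the Taylor expansion of $u_0$ combined with $\lambda_0=2u_0(\xi_0)$) is the same as the paper's, and your argument for ``critical points of $\tilde J_\e$ give solutions'' is the standard one the paper simply cites. The genuine gap is in the $d^3$ constant term. You attribute $-\mathfrak a_3 d^3$ to the classical Robin-function contribution $\alpha_6 H(\xi_0,\xi_0)$ as in Rey; but here $n=6$ and $\delta=|\e|d$, so by \eqref{varphiexp} the regular part $H$ enters $\int_\Omega PU_{\delta,\xi}^3$ (and all other terms) only at order $\delta^4=O(\e^4)=o(|\e|^3)$, hence it does not survive at the relevant order. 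In the paper $\mathfrak a_3$ comes from the purely nonlinear interaction term ($I_5$ there), i.e. from the region where $u_0+\e v_0$ and $PU_{\delta,\xi}$ are comparable: one must prove that the level set $\{0<u_0+\e v_0<PU_{\delta,\xi}\}$ coincides, up to $o(1)$ in the radius, with the ball $B(\xi,R_0\sqrt\delta)$, $R_0=(\alpha_6/u_0(\xi_0))^{1/4}$ (this is \eqref{claimlevel}), and then compute explicitly $\int_{|x-\xi|>R_0\sqrt\delta}\bigl(-2PU_{\delta,\xi}^3+6u_0PU_{\delta,\xi}^2\bigr)$ and $\int_{|x-\xi|<R_0\sqrt\delta}\bigl(-2u_0^3+6u_0^2PU_{\delta,\xi}\bigr)$, which yields $\mathfrak a_3=\tfrac{11}{9}\omega_6\alpha_6^{3/2}u_0(\xi_0)^{3/2}$ --- a constant determined by $u_0(\xi_0)$, not by $H(\xi_0,\xi_0)$. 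Moreover, your plan to handle the cubic integral ``as in Lemma \ref{error}'', splitting at radius $\sqrt\delta$ and using \eqref{a1}, cannot produce this coefficient: the remainders discarded in that lemma, such as $\int_{B(\xi,\sqrt\delta)}|u_0+\e v_0|^3$ and $\int_{\Omega\setminus B(\xi,\sqrt\delta)}U_{\delta,\xi}^3$, are themselves of size $\delta^3$, i.e. exactly the order you must compute, so that estimate is too crude for the energy expansion.

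A secondary, but still substantive, inaccuracy is your bookkeeping of the $\mathtt{sgn}(\e)(1-2v_0(\xi_0))d^2$ term. The piece $\e\int_\Omega u_0\,PU_{\delta,\xi}$ is indeed of order $\e\delta^2$, but its coefficient is the global quantity $\alpha_6\int_\Omega u_0(x)|x-\xi_0|^{-4}dx$, which cannot be localized via $\lambda_0=2u_0(\xi_0)$; similarly $-\e(v_0,PU_{\delta,\xi})=-\e\int_\Omega v_0U_{\delta,\xi}^2$ would only account for a factor $-v_0(\xi_0)$, not $-2v_0(\xi_0)$. In the correct cancellation scheme all terms linear in $PU_{\delta,\xi}$ are annihilated against the linear-in-$PU_{\delta,\xi}$ part of the cubic term using the equations for $u_0$ and $v_0$ (leaving only an $O(\e^4)$ remainder, the paper's $I_6$), and the surviving $\e\delta^2$ contribution is $\e\int_\Omega(v_0-\tfrac12)PU_{\delta,\xi}^2$, i.e. the interaction of the squared bubble with the second-order correction $\e v_0$ together with the $\e$-part of the linear term. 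Since the sign of $1-2v_0(\xi_0)$ decides for which sign of $\e$ the reduced energy has a stable critical point (and hence on which side of $\lambda_0$ the solution exists), this cancellation must be tracked exactly, not heuristically.
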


\begin{proof}
It is quite standard to prove that  if $(d, \eta)$ satisfies \eqref{sceltapar} and is a critical point of $\tilde J_\e$, then $W_{\delta,\xi}+\phi_\epsilon$ is a solution of \eqref{p1}
(see for example \cite{mipive}, Proposition 2.2). Moreover, it is not difficult to check that $\tilde J_\e(d,\eta)=J_\e(W_{\delta,\xi})+o\(|\e|^3\) $ uniformly with respect to $(d, \eta)$  which
 satisfies \eqref{sceltapar} (see for example \cite{mipive}, Proposition 2.2). \\
We need only to estimate  the main term of the reduced energy $J_\e(W_{\delta,\xi})$, i.e. 
\begin{align*} &J_\e(u_0+\e v_0-PU_{\de,\xi})\\ &= \frac 1 2 \intO |\nabla(u_0+\e v_0- PU_{\de,\xi})|^2-\frac{\lambda_0+\e}{2}\intO(u_0+\e v_0- PU_{\de,\xi})^2
-\frac 13\intO |u_0+\e v_0- PU_{\de,\xi}|^3	\\
&=\frac 1 2 \intO |\nabla(u_0+\e v_0)|^2+\frac 12 \intO|\nabla PU_{\de,\xi}|^2-\frac{\lambda_0+\e}{2}\intO(u_0+\e v_0)^2-\frac{\lambda_0+\e}{2}\intO(PU_{\de,\xi})^2\\  &-\underbrace{\(\intO \nabla u_0\nabla PU_{\de,\xi}-\lambda_0\intO u_0 PU_{\de,\xi}\)}_{=\intO |u_0|u_0 PU_{\de,\xi}}-\e\underbrace{ \(\intO \nabla v_0\nabla PU_{\de,\xi}-\lambda_0  \intO v_0 PU_{\de,\xi}- \intO u_0 PU_{\de,\xi}\)}_{= \intO 2|u_0| v_0  PU_{\de,\xi} }   \\
&+ \e^2\intO  v_0PU_{\de,\xi}\\
&-\frac 13\intO |u_0+\e v_0- PU_{\de,\xi}|^3\\
&  =\underbrace{\frac 1 2 \intO |\nabla(u_0+\e v_0)|^2-\frac{\lambda_0+\e}{2}\intO(u_0+\e v_0)^2-\frac13 \intO |u_0+\e v_0 |^3}_{=:I_1} \\
&+ \underbrace{\frac 12 \intO|\nabla PU_{\de,\xi}|^2-\frac 13\intO PU_{\de,\xi}^3}_{=:I_2}\underbrace{-\frac{\lambda_0}{2}\intO  PU_{\de,\xi}^2+\intO u_0  PU_{\de,\xi}^2}_{=:I_3}
\underbrace{-\frac\e 2\intO PU_{\de,\xi}^2+\e\int v_0 PU_{\de,\xi}^2}_{=:I_4}\\ 
&\underbrace{ -\frac 13\intO\(|u_0+\e v_0- PU_{\de,\xi}|^3-|u_0+\e v_0|^3- PU_{\de,\xi}^3+3(u_0  +\e v_0)PU_{\de,\xi}^2+3|u_0 +\e u_0|(u_0+\e v_0) PU_{\de,\xi}\)}_{=:I_5}\\&
\underbrace{ + \intO\Big[|u_0 +\e v_0|(u_0+\e v_0) -(|u_0|u_0 +2\e|u_0|  v_0)\Big] PU_{\de,\xi} }_{=:I_6}+ \underbrace{\e^2\intO v_0PU_{\de,\xi}}_{=:I_7}\end{align*}
It is clear that
$$I_7=\mathcal O\(\e^2\intO{\delta^2\over |x-\xi|^4}dx\right)=\mathcal O\(\e^2\delta^2\)=\mathcal O\(\e^4\).$$
To estimate $I_6$ by \eqref{a1} it follows that
$$I_6=O\left(\e^2\intO PU_{\de,\xi}
\right)=O\(\e^2\delta^2\)=\mathcal O\(\e^4\).$$
Now, $I_1$  does not depend neither on $d$ nor on $\eta$  and it will be included in the constant $\mathfrak c_0$ in \eqref{cruc1}.
By \eqref{varphiexp}
\begin{equation*}
\begin{aligned}
I_2&=\frac 12 \intO U_{\de,\xi}^3-\frac 13\intO PU_{\de,\xi}^3\\ &=\frac 12 \intO U_{\de,\xi}^3-\frac 13\intO\big(U_{\delta,\xi}(x)-\alpha_6 \delta^2 H(x, \xi)+\mathcal O\(\delta^4\)\big)^3\\
&=
\frac1 6
\int\limits_{\mathbb R}U^3+\mathcal O\left(\delta^2\intO U_{\delta,\xi}^2\right)+O\(\delta^4\)\\ &=\frac1 6
\int\limits_{\mathbb R}U_{\delta,\xi}^3+O\(\delta^4\).
 \end{aligned}
\end{equation*}\\
Now, 
 setting $\varphi_{\delta,\xi}:=PU_{\delta, \xi}-U_{\delta, \xi}=\mathcal O(\delta^2),$ by \eqref{varphiexp} and \eqref{sceltapar}
\begin{equation*}
\begin{aligned}
I_3&=\intO \( u_0(x)-\frac{\lambda_0}2\)(U_{\delta, \xi}+\varphi_{\delta,\xi})^2\\
& =\intO \( u_0(x)-u_0(\xi_0)\)U_{\delta, \xi}^2+\mathcal O(\delta^4)\\
&=\intO\left[\frac12\langle D^2 u_0(\xi_0)(x-\xi_0),(x-\xi_0)\rangle+\mathcal O(|x-\xi_0|^3)\right]\alpha_6^2\frac{\delta^4}{(\delta^2+|x-\xi|^2)^4}dx +\mathcal O(\delta^4)\\
&=\alpha_6^2\intO\frac12 \langle D^2 u_0(\xi_0)(x-\xi_0),(x-\xi_0)\rangle \frac{\delta^4}{(\delta^2+|x-\xi|^2)^4}dx +\mathcal O(\delta^4)\\
&=\alpha_6^2\delta^2\int\limits_{\Omega-\xi\over\delta}\frac12 \langle D^2 u_0(\xi_0)(\delta y+\sqrt\delta\eta),(\delta y+\sqrt\delta\eta)\rangle \frac{1}{(1+|y|^2)^4}dy +\mathcal O(\delta^4)\\
&=\frac{\alpha_6^2}2\delta^3\(\int\limits_{\mathbb R}\frac{1}{(1+|y|^2)^4}dy\)  \langle D^2 u_0(\xi_0)\eta,\eta\rangle +\mathcal O(\delta^4|\ln\delta|)\\
&=\frac{\alpha_6^2}2d^3|\e|^3\(\int\limits_{\mathbb R}\frac{1}{(1+|y|^2)^4}dy\)  \langle D^2 u_0(\xi_0)\eta,\eta\rangle +\mathcal O(\e^4|\ln|\e||).
 \end{aligned}
\end{equation*}
and analogously 
\begin{equation*}
\begin{aligned}
 I_4&=\e\intO\(v_0(x)-\frac12\) PU_{\de,\xi}^2\\ &=\e\left[\alpha_6^2\delta^2\(\int\limits_{\mathbb R}\frac{1}{(1+|y|^2)^4}dy\) \(v_0(\xi_0)-\frac12\)  +o(1)\right]\\
&=\e^3d^2\left[\alpha_6^2\(\int\limits_{\mathbb R}\frac{1}{(1+|y|^2)^4}dy\) \(v_0(\xi_0)-\frac12\)  +o(1)\right]
 \end{aligned}
\end{equation*}

Finally, we have to estimate $I_5.$

We point out that
$$|u_0+\e v_0- PU_{\de,\xi}|^3-|u_0+\e v_0|^3- PU_{\de,\xi}^3+3(u_0  +\e v_0)PU_{\de,\xi}^2+3|u_0 +\e u_0|(u_0+\e v_0) PU_{\de,\xi}=0\ \hbox{if}\ u_0+\e v_0\le0$$
and so
$$\begin{aligned} I_5  &=-\frac 13\int_{\{u_0+\e v_0\ge0\}}\(|u_0+\e v_0- PU_{\de,\xi}|^3-(u_0+\e v_0)^3- PU_{\de,\xi}^3\right.\\ &\left.
\hskip 4truecm+3(u_0  +\e v_0)PU_{\de,\xi}^2+
3 (u_0+\e v_0)^2 PU_{\de,\xi}\)dx \\
&=-\frac 13\int_{\{u_0+\e v_0\ge PU_{\delta, \xi}\}}\left(-2PU_{\delta, \xi}^3+6(u_0  +\e v_0) PU_{\de,\xi}^2\right)\\
&-\frac 13\int_{\{0< u_0+\e v_0<PU_{\delta, \xi}\}}\left(-2(u_0+\e v_0)^3+6(u_0  +\e v_0)^2 PU_{\de,\xi} \right).\end{aligned}$$
%
 
First of all we claim that for any $\sigma>0$ there exists $\varepsilon_0>0$ such that for any $\e\in(-\varepsilon_0,\varepsilon_0)$ and $(d,\xi)$ satisfying \eqref{sceltapar}
\begin{equation}\label{claimlevel}
B\(\xi, R^1_\delta\sqrt\delta\)\subset\{x\in\Omega\,:\, 0< u_0(x)+\e v_0(x)< P U_{\delta, \xi}(x)\} \cap B\(\xi,\delta^\frac14\)\subset B\(\xi, R^2_\delta\sqrt\delta\)
\end{equation}
where
\begin{equation}\label{ro}  R^1_\delta,R^2_\delta=R_0+o(1)\ \hbox{with}\  R_0:=\(\frac{\alpha_6}{u_0(\xi_0)}\)^{\frac 14}. \end{equation}
We remind that $\delta=\mathcal O(\epsilon)$ and also that   $P U_{\delta, \xi}(x)=\alpha_6{\delta^2\over(\delta^2+|x-\xi|^2)^2}+\mathcal O(\epsilon^2)$ uniformly in $\Omega.$
If $|x-\xi|<R^1_\delta\sqrt\delta$ is small enough then by mean value theorem $u_0(x)+\e v_0(x)=u_0(\xi_0)+\mathcal O_1(\epsilon) $ and
$$ \begin{aligned}u_0(x)+\e v_0(x)< P U_{\delta, \xi}(x)\ &\Leftrightarrow\ \frac{u_0(\xi_0)}{\alpha_6}+\mathcal O_1(\epsilon)< {\delta^2\over(\delta^2+|x-\xi|^2)^2}\\
& \Leftrightarrow\ 
|x-\xi|\le \sqrt\delta \underbrace{\({1\over  \(\frac{u_0(\xi_0)}{\alpha_6}+\mathcal O_1(\epsilon)\)^{\frac12}}-\delta\)^{1\over2}}_{R^1_\delta}\end{aligned}$$
and the first inclusion in \eqref{claimlevel} together with \eqref{ro} follow.
On the other hand, again by mean value theorem 
we have  $u_0(x)+\e v_0(x)=u_0(\xi_0)+\mathcal O_2(\sqrt \delta)$ for any $x\in B(\xi,\delta^\frac14)$  
and arguing as above we get the second inclusion in \eqref{claimlevel} and \eqref{ro}. \\

It is useful to point out that by \eqref{claimlevel} we immediately get
\begin{equation}\label{claimlevel+}
B^c\(\xi, R^1_\delta\sqrt\delta\) \supset 
\{x\in\Omega\,:\,   u_0(x)+\e v_0(x)\ge P U_{\delta, \xi}(x)\}\cup B^c\(\xi,\delta^\frac14\)
\supset B^c\(\xi, R^2_\delta\sqrt\delta\)
\end{equation}
 Now by \eqref{claimlevel} and \eqref{claimlevel+} we deduce
$$\begin{aligned} I_5&=-\frac 13\int_{\{u_0+\e v_0\ge PU_{\delta, \xi}\}}\left(-2PU_{\delta, \xi}^3+6(u_0  +\e v_0) PU_{\de,\xi}^2\right)\\
&-\frac 13\int_{\{0<u_0+\e v_0<PU_{\delta, \xi}\}}\left(-2(u_0+\e v_0)^3+6(u_0  +\e v_0)^2 PU_{\de,\xi} \right)\\
&=-\frac 13\int_{\{u_0+\e v_0\ge PU_{\delta, \xi}\}\cup B^c\(\xi,\delta^\frac14\)}\left(-2PU_{\delta, \xi}^3+6(u_0  +\e v_0) PU_{\de,\xi}^2\right)\\
&+\frac 13\int_{B^c\(\xi,\delta^\frac14\)\setminus \{u_0+\e v_0\ge PU_{\delta, \xi}\}\cap B^c\(\xi,\delta^\frac14\) }\left(-2PU_{\delta, \xi}^3+6(u_0  +\e v_0) PU_{\de,\xi}^2\right)\\
&-\frac 13\int_{\{0<u_0+\e v_0<PU_{\delta, \xi}\}\cap B\(\xi,\delta^\frac14\)}\left(-2(u_0+\e v_0)^3+6(u_0  +\e v_0)^2 PU_{\de,\xi} \right)\\
& -\frac 13\int_{\{0<u_0+\e v_0<PU_{\delta, \xi}\}\cap B^c\(\xi,\delta^\frac14\)}\left(-2(u_0+\e v_0)^3+6(u_0  +\e v_0)^2 PU_{\de,\xi} \right)\\
  &=-\frac 13\int_{\{u_0+\e v_0\ge PU_{\delta, \xi}\}\cup B^c\(\xi,\delta^\frac14\)  }\left(-2PU_{\delta, \xi}^3+6(u_0  +\e v_0) PU_{\de,\xi}^2\right)\\ 
  &-\frac 13\int_{\{0<u_0+\e v_0<PU_{\delta, \xi}\} \cap B\(\xi,\delta^\frac14\)}\left(-2(u_0+\e v_0)^3+6(u_0  +\e v_0)^2 PU_{\de,\xi} \right)+o(\delta^3),
\end{aligned}$$
because
$$\begin{aligned}& \int_{B^c\(\xi,\delta^\frac14\)\setminus\{u_0+\e v_0\ge PU_{\delta, \xi}\}\cap B^c\(\xi,\delta^\frac14\)}\left(-2PU_{\delta, \xi}^3+6(u_0+\e v_0) PU_{\delta, \xi}^2  \right)\\ &=\mathcal O\(
\int_{  B^c\(\xi,\delta^\frac14\)}\left(U_{\delta, \xi}^3+ U_{\delta, \xi}^2  \right)\)\\
&=\mathcal O\(\delta^\frac72\),\end{aligned}$$
and
$$\begin{aligned}&\int_{\{0<u_0+\e v_0<PU_{\delta, \xi}\}\cap B^c\(\xi,\delta^\frac14\)}\left(-2(u_0+\e v_0)^3+6(u_0  +\e v_0)^2 PU_{\de,\xi} \right)\\ &=\mathcal O\(\delta^3\mathtt{meas}\{0<u_0(x)<2\delta\}\) \\
&=o(\delta^3)
\end{aligned}
$$
since
$PU_{\delta, \xi}(x)=\mathcal O(\delta)$ if $|x-\xi|\ge \delta^\frac14$ and  $\{0<u_0+\e v_0<PU_{\delta, \xi}\}\cap B^c\(\xi,\delta^\frac14\)\subset \{0<u_0(x)<2\delta\}$ if $\delta$ is small enough. 
Next we claim that
$$
\begin{aligned}
&-\frac 13\int_{\{u_0+\e v_0\ge PU_{\delta, \xi}\}\cup B^c\(\xi,\delta^\frac14\)  }\left(-2PU_{\delta, \xi}^3+6(u_0  +\e v_0) PU_{\de,\xi}^2\right)\\ &-\frac 13\int_{\{0<u_0+\e v_0<PU_{\delta, \xi}\} \cap B\(\xi,\delta^\frac14\)}\left(-2(u_0+\e v_0)^3+6(u_0  +\e v_0)^2 PU_{\de,\xi} \right)+o(\delta^3) \\
 &=-\frac 13\int_{\{u_0+\e v_0\ge PU_{\delta, \xi}\}\cup B^c\(\xi,\delta^\frac14\)  }\left(-2PU_{\delta, \xi}^3+6u_0 PU_{\delta, \xi}^2  \right)\\ &-\frac 13\int_{\{0<u_0+\e v_0<PU_{\delta, \xi}\} \cap B\(\xi,\delta^\frac14\)}\left(-2u_0^3+6u_0^2 PU_{\delta, \xi}  \right)+o(\delta^3),\\
\end{aligned}
$$
Indeed using \eqref{claimlevel+} and \eqref{claimlevel} we get
$$\int_{\{u_0+\e v_0\ge PU_{\delta, \xi}\}\cup B^c\(\xi,\delta^\frac14\)  }   PU_{\de,\xi}^2=\mathcal O\(
\int_{  B^c\(\xi,\delta^\frac12\)} U_{\delta, \xi}^2 \)=\mathcal O\(\delta^3\),$$
$\mathtt{meas} B \(\xi,\delta^\frac12\)=\mathcal O(\delta^3)$ and
$$\int_{\{u_0+\e v_0<PU_{\delta, \xi}\}\cap B \(\xi,\delta^\frac14\)  }  PU_{\de,\xi} =\mathcal O\(
\int_{  B \(\xi,\delta^\frac12\)} U_{\delta, \xi}  \)=\mathcal O\(\delta^3\).$$

We estimate the last two  terms in the expansion of $I_5.$ By \eqref{claimlevel+}
\begin{equation*}
B^c\(\xi, R^2_\delta\sqrt\delta\)\subset\{x\in\Omega\,:\, u_0(x)+\e v_0(x)\ge  P U_{\delta, \xi}\}\cup B^c\(\xi,\delta^\frac14\)\subset B^c\(\xi, R^1_\delta\sqrt\delta\)\
\end{equation*}
Hence
$$\begin{aligned}\int_{|x-\xi|>R_\delta^2\sqrt\de}\left(-2PU_{\delta, \xi}^3+6u_0 PU_{\delta, \xi}^2  \right)&\le \int_{\{u_0+\e v_0\ge PU_{\delta, \xi}\}\cup B\(\xi,\delta^\frac14\)}\left(-2PU_{\delta, \xi}^3+6u_0 PU_{\delta, \xi}^2  \right)\\&\le \int_{|x-\xi|>R_\de^1\sqrt\de}\left(-2PU_{\delta, \xi}^3+6u_0 PU_{\delta, \xi}^2  \right).\end{aligned}$$
Now if $R_\delta$ denotes either $R^1_\delta$ or $R^2_\delta$ we get
$$\begin{aligned}&\int_{|x-\xi|>R_\de\sqrt\de}\left(-2PU_{\delta, \xi}^3+6u_0 PU_{\delta, \xi}^2  \right)\\ &=-2\int_{|x-\xi|>R_\de\sqrt\de}U_{\de,\xi}^3+6\int_{|x-\xi|>R_\de\sqrt\de}u_0 U_{\de,\xi}^2+\mathcal O\(\delta^4\)\\
&  =-2\int_{|y|>\frac{R_\de}{\sqrt\de}}\frac{\alpha_6^3}{(1+|y|^2)^6}+6\de^2\int_{|y|>\frac{R_\de}{\sqrt\de}}u_0(\de y+\xi)\frac{\alpha_6^2}{(1+|y|^2)^4}+\mathcal O\(\de^4\)\\
& =-2\omega_6\alpha_6^3\int_{\frac{R_\de}{\sqrt\de}}^{+\infty}\frac{r^5}{(1+r^2)^6}+6\de^2\omega_6\alpha_6^2 u_0(\xi_0)\int_{\frac{R_\de}{\sqrt\de}}^{+\infty}\frac{r^5}{(1+r^2)^4}+\mathcal O\(\de^4\int_{\frac{R_\de}{\sqrt\de}}^{+\infty}\frac{r^7}{(1+r^2)^4}\)+\mathcal O\(\de^4\)\\
&= -\frac 13\omega_6\alpha_6^3 R_\de^{-6}\de^3+3\de^3\omega_6\alpha_6^2R_\de^{-2}u_0(\xi_0)+\mathcal O\(\de^4|\log\de|\)\\
&= -\frac 13\omega_6\alpha_6^3 R_0^{-6}\de^3+3\de^3\omega_6\alpha_6^2R_0^{-2}u_0(\xi_0)+o\(\de^{3}\), \ \hbox{because of \eqref{ro}}.
\end{aligned}$$
and by comparison
\begin{equation}\label{fuoripalla}
\int_{\{u_0+\e v_0\ge PU_{\delta, \xi}\}\cup B^c\(\xi,\delta^\frac14\)}\left(-2PU_{\delta, \xi}^3+6u_0 PU_{\delta, \xi}^2  \right)= -\frac 13\omega_6\alpha_6^3 (R_0)^{-6}\de^3+3\de^3\omega_6\alpha_6^2(R_0)^{-2}u_0(\xi_0)+o\(\de^{3}\).\end{equation}
In a similar way, by   \eqref{claimlevel} 
$$\begin{aligned}\int_{|x-\xi|<R_\delta^1\sqrt\de}\left(-2u_0^3+6u_0^2 PU_{\delta, \xi}  \right)&\le \int_{\{0< u_0+\e v_0<PU_{\delta, \xi}\}\cap  B\(\xi,\delta^\frac14\)}\left(-2u_0^3+6u_0^2 PU_{\delta, \xi} \right)\\ &\le \int_{|x-\xi|<R_\de^2\sqrt\de}\left(-2u_0^3+6u_0^2 PU_{\delta, \xi}  \right).\end{aligned}$$
and   if $R_\delta$ denotes either $R^1_\delta$ or $R^2_\delta$ we get
 $$\begin{aligned}&\int_{|x-\xi|<R_\de\sqrt\de}\(-2u_0^3+6u_0^2 PU_{\delta, \xi}\)\\ &=-2\de^6\int_{|y|<\frac{R_\de}{\sqrt\de}}u_0^3(\de y+\xi)+6\de^4\int_{|y|<\frac{R_\de}{\sqrt\de}}u_0^2(\de y+\xi)\frac{\alpha_6}{(1+|y|^2)^2}+\mathcal O\(\de^5\)\\
& =\(-2u_0^3(\xi_0)+\mathcal O\(\sqrt\de\)\)\de^6\omega_6\int_0^{\frac{R_\de}{\sqrt \de}}r^5+6\alpha_6\(u_0^2(\xi_0)+\mathcal O\(\sqrt\de\)\)\de^4\omega_6\int_0^{\frac{R_\de}{\sqrt\de}}\frac{r^5}{(1+r^2)^2}+\mathcal O\(\de^5\)\\
& =-2\de^3u_0^3(\xi_0)\omega_6R_\de^6+3\alpha_6\de^3u_0^2(\xi_0)\omega_6R_\de^2+\mathcal O\(\de^{\frac 72}\)\\
& =-2\de^3u_0^3(\xi_0)\omega_6R_0^6+3\alpha_6\de^3u_0^2(\xi_0)\omega_6R_0^2+o\(\de^3\), \ \hbox{because of \eqref{ro}}.
\end{aligned}$$
and by comparison
\begin{equation}\label{dentropalla}
\int_{\{u_0+\e v_0<PU_{\delta, \xi}\}\cap B\(\xi,\delta^\frac14\)}\left(-2u_0^3+6u_0^2 PU_{\delta, \xi} \right)=-2\de^3u_0^3(\xi_0)\omega_6R_0^6+3\alpha_6\de^3u_0^2(\xi_0)\omega_6R_0^2+o\(\de^{3}\)\end{equation}

Finally, by \eqref{dentropalla} and \eqref{fuoripalla}
 $$I_5=|\e|^3d^3\(-\frac{11}9\omega_6\alpha_6^{\frac 32}(u_0(\xi_0))^{\frac 32}+o(1)\)$$
Collecting all the previous estimates we get 
$$
\tilde J_\e(d,\eta)= \mathfrak c_0(\e)+|\e|^3\underbrace{\left\{ \mathtt{sgn}(\e)\(1-2v_0(\xi_0)\) d^2\mathfrak a_1+d^3\(\mathfrak a_2 \langle D^2 u_0(\xi_0)\eta, \eta\rangle-\mathfrak a_3 \)\right	\}}_{=:\Upsilon(d,\eta)}+o\(|\e|^3\)
$$
 with
$$\begin{aligned}
&\mathfrak a_1 =\alpha_6^2\(\int\limits_{\mathbb R^6}\frac{1}{(1+|y|^2)^4}dy\)=96\omega_6 \\
&\mathfrak a_2 =\frac{\alpha_6^2}2\int\limits_{\mathbb R}\frac{dy}{(1+|y|^2)^4}\\
&\mathfrak a_3 =\frac{11}{9}\omega_6\alpha_6^{\frac 32}(u_0(\xi_0))^{\frac 32}
\end{aligned}$$
and that concludes the proof.
\end{proof}

We are now in position to prove Theorem \ref{main1}.
\begin{proof}[\bf Proof of Theorem \ref{main1}: completed] The claim  follows by Proposition \ref{cruc0} taking into account that
 if $ \mathtt{sgn}(\e)\(1-2v_0(\xi_0)\)>0$ the function $\Upsilon$ has always an isolated maximum point
 $(d_0,0),$ with $ d_0:={2\mathfrak a_1\over 3\mathfrak a_3}\mathtt{sgn}(\e)\(1-2v_0(\xi_0)\)$, which is stable under uniform perturbations. 
\end{proof}

\section{A generic result}\label{gen}

Let $\Omega_0$ be a bounded and smooth domain in $\mathbb R^n$, we let $D$ be and open neighbourhood of $\overline{\Omega_0}$ and  $\alpha\in(0,1).$
There exists $\epsilon>0$ such that if $\theta\in C^{3,\alpha}(\overline D,\mathbb R^n)$ with $ \|\theta\|_{2,\alpha}\le\epsilon $ then $\Theta=I+\theta$ maps $\Omega_0$ in a one-to-one way onto the smooth domain $\Omega_\theta:=\Theta(\Omega_0)$ with boundary $\partial\Omega_\theta=\Theta(\partial\Omega_0).$\\
If $x\in\Omega_0$ we agree that $\hat x=\Theta x=(I+\theta)x\in\Omega_\theta,$ with $\theta\in V.$ If
$\hat u\in H^1_0(\Omega_\theta)\cap H^2(\Omega_\theta)$ then it is clear that   
 $u=\hat u\circ\Theta \in  H^1_0(\Omega_0)\cap H^2(\Omega_0).$
 \\
 Our result reads as follows.
\begin{theorem}\label{main}
The set
\begin{equation}\label{non-de}\begin{aligned}\Xi:=\big\{\theta\in C^{3,\alpha}(\overline D,\mathbb R^n)\ :\ &\hbox{if $\lambda>0$ and $u\in H^1_0(\Omega_\theta)$ solve }\\ 
&\Delta u+\lambda u+|u|^{4\over n-2}u=0\ \hbox{in}\ \Omega_\theta,\ u=0\ \hbox{on}\ \partial\Omega_\theta\\
& \hbox{then $u$ is non-degenerate} \big\}\end{aligned}\end{equation}
is a residual subset in $C^{3,\alpha}(\overline D,\mathbb R^n),$
 i.e. $C^{3,\alpha}(\overline D,\mathbb R^n)\setminus \Xi$ is a countable union of close subsets without interior points.
\end{theorem}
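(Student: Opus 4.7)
The plan is to apply the Sard--Smale transversality theorem to the nonlinear map obtained by pulling the equation back to the fixed reference domain $\Omega_0$. For $\theta$ in a small neighborhood $\mathcal V\subset C^{3,\alpha}(\overline D,\mathbb R^n)$ of the origin, $\Theta=I+\theta$ is a diffeomorphism from $\Omega_0$ onto $\Omega_\theta$, and the change of unknowns $u:=\hat u\circ\Theta$ conjugates the Laplacian on $\Omega_\theta$ into a linear uniformly elliptic operator $L_\theta$ on $\Omega_0$ whose coefficients depend smoothly on $\theta$. The problem \eqref{1theta} thus rewrites as $\mathcal F(u,\lambda,\theta)=0$, where
$$
\mathcal F\colon C^{2,\alpha}_0(\overline{\Omega_0})\times(0,\infty)\times\mathcal V\longrightarrow C^{0,\alpha}(\overline{\Omega_0}),\qquad \mathcal F(u,\lambda,\theta):=L_\theta u+\lambda u+|u|^{4/(n-2)}u.
$$
This map is $C^1$ and $D_u\mathcal F(u,\lambda,\theta)[\phi]=L_\theta\phi+\lambda\phi+\tfrac{n+2}{n-2}|u|^{4/(n-2)}\phi$ is Fredholm of index $0$ with homogeneous Dirichlet data; through the pullback, the non-degeneracy condition in \eqref{non-de} is equivalent to the injectivity (hence surjectivity) of this operator.

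The heart of the argument is to show that $0$ is a regular value of $\mathcal F$ on the open set $\{u\not\equiv 0\}$, i.e.\ that the total derivative $D\mathcal F$ is surjective at every zero. Assuming not, one produces by duality a nonzero $\psi\in C^{2,\alpha}_0(\overline{\Omega_0})$ in the $L^2$-cokernel of $D_u\mathcal F$, satisfying the transposed linear equation together with $\int_{\Omega_0}u\psi=0$ (from the $\lambda$-direction, $D_\lambda\mathcal F=u$) and $\langle D_\theta\mathcal F[\tau],\psi\rangle=0$ for every $\tau\in C^{3,\alpha}(\overline D,\mathbb R^n)$. Undoing the pullback and using Hadamard's shape derivative formula, this last orthogonality condition reduces to
$$
\int_{\partial\Omega_0}(\partial_\nu u)(\partial_\nu\psi)(\tau\cdot\nu)\,d\sigma=0\qquad\text{for every admissible $\tau$,}
$$
which by arbitrariness of the normal trace of $\tau$ forces $(\partial_\nu u)(\partial_\nu\psi)\equiv 0$ on $\partial\Omega_0$. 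The main obstacle is now to deduce $\psi\equiv 0$: by Aronszajn--Cordes unique continuation from the boundary applied to the equation solved by $u$, if $\partial_\nu u$ vanished on some relatively open piece $\Gamma\subset\partial\Omega_0$ then $u$ would have vanishing Cauchy data along $\Gamma$ and extend by zero across it, contradicting $u\not\equiv 0$; hence $\{\partial_\nu u\neq 0\}$ is a dense open subset of $\partial\Omega_0$, on which $\partial_\nu\psi=0$, and a second application of the same unique continuation principle to the linearized equation satisfied by $\psi$ gives $\psi\equiv 0$, the desired contradiction.

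Regularity of $0$ yields, by the implicit function theorem, that $\mathcal Z:=\mathcal F^{-1}(0)\cap\{u\neq 0\}$ is a $C^1$ Banach submanifold of $C^{2,\alpha}_0\times(0,\infty)\times\mathcal V$, and the projection $\pi\colon\mathcal Z\to\mathcal V$ is a Fredholm map of index $1$, whose regular values are exactly those $\theta$ for which every positive-$\lambda$ solution is non-degenerate. To handle the non-compactness in $\lambda$ and $\|u\|$, I exhaust $\mathcal Z$ by the closed subsets
$$
\mathcal Z_N:=\{(u,\lambda,\theta)\in\mathcal Z:\ N^{-1}\le\lambda\le N,\ \|u\|_{C^{2,\alpha}}\le N\},\qquad N\in\mathbb N,
$$
and use Schauder estimates to prove that the restriction $\pi_N:=\pi|_{\mathcal Z_N}$ is proper. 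Hence the set of its critical values is closed in $\mathcal V$, and by Smale--Sard it has empty interior, so is nowhere dense. Every degenerate triple $(u,\lambda,\theta)$ lies in some $\mathcal Z_N$, so the complement of $\Xi\cap\mathcal V$ equals $\bigcup_N\operatorname{critval}(\pi_N)$: a countable union of closed nowhere-dense subsets. Covering $C^{3,\alpha}(\overline D,\mathbb R^n)$ by translates of $\mathcal V$ completes the proof that $\Xi$ is residual.
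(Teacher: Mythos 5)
Your overall strategy coincides with the paper's: pull the problem back to the fixed domain $\Omega_0$, apply a Sard--Smale/transversality theorem, verify that $0$ is a regular value by pairing the domain derivative (a Hadamard boundary term $\int_{\partial\Omega_0}\partial_\nu u\,\partial_\nu\psi\,(\tau\cdot\nu)$) with a putative cokernel element and invoking unique continuation, and deal with non-compactness by exhausting the zero set into pieces on which the projection is proper. The essential difference is how the parameter $\lambda$ is treated, and this is where your argument has a genuine gap. You work with the unaugmented map $\mathcal F(u,\lambda,\theta)$ and assert that the regular values of the index-one projection $\pi\colon\mathcal Z\to\mathcal V$ are \emph{exactly} the $\theta$ for which every solution with $\lambda>0$ is non-degenerate. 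Only one inclusion is true: $\theta$ is a regular value of $\pi$ if and only if the joint derivative $(w,\mu)\mapsto D_u\mathcal F(u,\lambda,\theta)[w]+\mu\,D_\lambda\mathcal F(u,\lambda,\theta)$ is surjective at every solution, and this is strictly weaker than invertibility of $D_u\mathcal F$ alone. At a turning point (fold) of a branch $\lambda\mapsto u_\lambda$ the linearization $D_u\mathcal F$ has a nontrivial kernel element $v$ while $\int_{\Omega_\theta}u\,v\,dx\neq0$; since $D_u\mathcal F$ is formally self-adjoint, $D_\lambda\mathcal F=u$ then lies outside its range, the joint derivative is onto, and $(u,\lambda,\theta)$ is a regular point of $\pi$ even though $u$ is degenerate in the sense of \eqref{non-de}. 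So your argument proves only that, generically in $\theta$, the solution set is a one-dimensional manifold in $(u,\lambda)$; it does not exclude degenerate solutions, which is the whole content of the theorem.

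This is precisely what the paper's augmentation is designed to fix: the target is enlarged to $\mathbb R\times L^2(\Omega_0)$ by adding the scalar component $Q(\lambda,\hat u,\theta)=\int_{\Omega_\theta}\bigl(|\nabla\hat u|^2-|\hat u|^{p+1}-\lambda\hat u^2\bigr)d\hat x$, which vanishes identically on solutions, so that the state derivative $F'_{\lambda,u}$ is Fredholm of index $0$ from $\mathbb R\times\bigl(H^1_0\cap H^2\bigr)$ to $\mathbb R\times L^2$. Then ``$0$ is a regular value of $F(\cdot,\cdot,\theta)$'' forces this derivative to be injective, and a degenerate direction $v$ would produce the kernel element $(0,v)$: indeed $\partial_uQ[v]=(1-p)\int_{\Omega_\theta}|\hat u|^{p-1}\hat u\,v$, and this integral vanishes automatically for $v$ in the kernel of the linearized equation (test the equation against $v$ and the linearized equation against $\hat u$ and subtract). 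Hence with the augmented map regularity really does yield non-degeneracy, while with your formulation it does not; your proof would need either this augmentation or a separate argument ruling out folds, which is not available by soft means. A second, minor and repairable, point: in your exhaustion the sets $\mathcal Z_N$ carry no lower bound on $u$, so a sequence of solutions may converge to $u\equiv0$ (excluded from $\mathcal Z$) and $\pi_N$ fails to be proper; the paper's sets $B_m$ impose $\|u\|\ge 1/m$ exactly for this reason, and you should do the same.
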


The proof relies on the  following abstract transversality theorem  (see \cite{Q,ST,U}).
 \begin{theorem}\label{tran}
 Let $X,Y,Z$ be three Banach spaces and $U\subset X,$ $V\subset Y$ open subsets.
 Let $F:U\times V\to Z$ be a $C^\alpha-$map with $\alpha\ge1.$ Assume that

 \begin{itemize}
 \item[i)] for any $y\in V$, $F(\cdot,y):U\to Z$ is a Fredholm map of index $l$ with $l\le\alpha;$
 \item[ii)] $0$ is a regular value of $F$, i.e. the operator $F'(x_0,y_0):X\times Y\to Z$ is onto at any point $(x_0,y_0)$ such that $F(x_0,y_0)=0;$
\item[iii)] the map  $\pi\circ i:F^{-1}(0)\to Y$ is $\sigma-$proper, i.e.  $F^{-1}(0)=\cup_{s=1}^{+\infty} C_s$
 where $C_s$ is a closed set and the restriction $\pi\circ i_{|_{C_s}}$ is proper for any $s$; here $i:F^{-1}(0)\to Y$ is the canonical embedding and $\pi:X\times Y\to Y$ is the projection.
 \end{itemize}

 Then the set
$\mathcal V:=\left\{y\in V\ :\ 0\ \hbox{is  a regular value of } F(\cdot,y)\right\}$
 is a  residual subset of $V$, i.e. $V\setminus \mathcal V$ is a countable union of close subsets without interior points.

\end{theorem}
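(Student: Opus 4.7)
The plan is to recognize Theorem~\ref{tran} as a Sard-Smale-type statement applied to the projection $\pi \circ i$ restricted to the zero set $\mathcal M := F^{-1}(0)$. The argument decomposes into three natural steps: equip $\mathcal M$ with a $C^\alpha$ Banach manifold structure, identify $\pi \circ i$ as a Fredholm map of index $l$ whose regular values coincide with $\mathcal V$, and finally invoke Sard-Smale piecewise along the $\sigma$-proper decomposition given by (iii).

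First, at any $(x_0, y_0) \in \mathcal M$, hypothesis (ii) says that $F'(x_0,y_0): X \times Y \to Z$ is surjective, while the Fredholmness of $F'_x(x_0,y_0)$ from (i) ensures that $\ker F'(x_0,y_0)$ admits a closed complement in $X \times Y$. The implicit function theorem then presents $\mathcal M$ locally as a $C^\alpha$ graph, endowing it with the structure of a $C^\alpha$ Banach submanifold of $U \times V$ with tangent space $T_{(x_0,y_0)}\mathcal M = \ker F'(x_0,y_0)$.

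Second, the differential of $\pi \circ i$ at $(x_0,y_0)$ is the second-coordinate projection $(\xi,\eta) \mapsto \eta$ restricted to $\ker F'(x_0,y_0)$. A direct computation using the relation $F'_x(x_0,y_0)(X) + F'_y(x_0,y_0)(Y) = Z$ shows that its kernel equals $\ker F'_x(x_0,y_0) \times \{0\}$, which is finite-dimensional, and its image equals $\{\eta \in Y : F'_y(x_0,y_0)\eta \in \operatorname{Im} F'_x(x_0,y_0)\}$, closed of codimension $\operatorname{codim} \operatorname{Im} F'_x(x_0,y_0)$ in $Y$. Hence $\pi \circ i$ is $C^\alpha$ Fredholm of index exactly $l$. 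Moreover, this differential is surjective precisely when $F'_y(Y) \subset \operatorname{Im} F'_x(x_0,y_0)$, which combined with $F'_x(X) + F'_y(Y) = Z$ forces $F'_x(x_0,y_0)$ onto $Z$, yielding the key equivalence: $y_0$ is a regular value of $\pi \circ i$ if and only if $y_0 \in \mathcal V$.

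Finally, writing $\mathcal M = \bigcup_{s \geq 1} C_s$ as in (iii), each restriction $\pi \circ i|_{C_s}$ is a proper $C^\alpha$ Fredholm map of index $l \leq \alpha$, so the Sard-Smale theorem for proper Fredholm maps guarantees that its set of critical values is closed with empty interior in $Y$. Their countable union is then a countable union of closed nowhere-dense subsets of $Y$ and, by the equivalence of Step~2, contains $V \setminus \mathcal V$, establishing residuality of $\mathcal V$. The main obstacle is the careful matching of the regularity condition $l \leq \alpha$ from (i) with the precise version of Sard-Smale used in this last step, since Smale's original theorem demands the strict inequality $\alpha > l$; one must invoke a sharpened formulation that covers the borderline case, which is the essential content of the transversality framework developed in the cited references. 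A secondary technical point is to verify in Step~1 that $\ker F'(x_0,y_0)$ genuinely admits a topological complement in $X\times Y$, a property that rests on the combination of Fredholmness of $F'_x$ and surjectivity of $F'$ rather than on either alone.
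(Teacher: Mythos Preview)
The paper does not prove Theorem~\ref{tran}: it is quoted as a known abstract transversality result, with references to \cite{Q,ST,U} (Quinn, Saut--Temam, Uhlenbeck), and then applied to the concrete map $F$ defined afterward. There is therefore no ``paper's own proof'' to compare against; your outline is precisely the standard Sard--Smale argument that underlies those references and is correct in substance.

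One small technical refinement to your Step~3: the sets $C_s$ are merely closed subsets of $\mathcal M=F^{-1}(0)$, not submanifolds, so Sard--Smale is not applied directly to $\pi\circ i|_{C_s}$. The correct phrasing is that the critical set $\Sigma$ of $\pi\circ i:\mathcal M\to Y$ is closed in $\mathcal M$, hence $\Sigma\cap C_s$ is closed; properness of $\pi\circ i|_{C_s}$ then makes $(\pi\circ i)(\Sigma\cap C_s)$ closed in $Y$, while the local Sard--Smale reduction on the manifold $\mathcal M$ (under the regularity hypothesis $l\le\alpha$) shows it has empty interior. This is exactly how the cited references proceed, and your sketch is otherwise faithful to them.
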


Indeed, in our case we choose
$$\begin{aligned}
&X=\mathbb R\times \(H^1_0(\Omega_0)\cap H^2(\Omega_0)\)\\
&U=(0,\infty)\times \(H^1_0(\Omega_0)\cap H^2(\Omega_0)\setminus\{0\}\)\\
&Y= C^{3,\alpha}\(\overline D,\mathbb R^n\)\\
&V=\mathcal B_\epsilon:=\left\{\theta\in C^{3,\alpha}(\overline D,\mathbb R^n)\ :\ \|\theta\|_{2,\alpha}<\epsilon\right\}\\
&Z=\mathbb R\times L^2(\Omega_0).
\end{aligned}$$
$X$ and Z are  Banach spaces equipped with the norms
$\|(a,u)\|_X:=|a|+\|u\|_{H^1_0\cap H^2(\Omega_0)},$
and $\|(a,u)\|_Z:=|a|+\|u\|_{L^2(\Omega_0)},$
 respectively.
  Moreover,  the function $F:U\times V\to Z$ is defined by
$$
 F(\lambda,u,\theta):=\(Q (\lambda,\hat u,\theta),\Delta _{\hat x} \hat u+|\hat u|^{p-1}\hat u+\lambda \hat u\),
$$
where
$$
 Q (\lambda,\hat u,\theta):=\int\limits_{\Omega_\theta}\(|\nabla_{\hat x} \hat u|^2-|\hat u|^{p+1}-\lambda \hat u^2\)d\hat x.
$$
 It is clear that
 $$F(\lambda,u,\theta)=(0,0)\ \Leftrightarrow\ \Delta _{\hat x} \hat u+|\hat u|^{p-1}\hat u+\lambda \hat u=0\ \hbox{in}\ \Omega_\theta,\ \hat u=0\ \hbox{on}\ \partial\Omega_\theta. $$
 
 Theorem \ref{main} will follow by Theorem \ref{tran} as soon as we prove that $F$ satisfies the   assumptions and this is done below.\\
 
 First of all, we rewrite $F$ in terms of the $x-$variable (see \cite{ST,P})
 
\begin{lemma} We have
\begin{equation}\label{f1}
 Q (\lambda,\hat u,\theta):=\int\limits_{\Omega_0}\left\{\nabla u\cdot\left[(\det \Theta')(\Theta')^{-1}(^t\Theta')^{-1}\nabla u\right]-\(|u|^{p+1}+\lambda  u^2\)(\det \Theta')\right\}d x.
 \end{equation}
 and
\begin{equation}\label{f2}\Delta _{\hat x} \hat u+|\hat u|^{p-1}\hat u+\lambda \hat u=
{\mathrm {div}}\left[(\det \Theta')(\Theta')^{-1}(^t\Theta')^{-1}\nabla u\right]+\(| u|^{p-1}  u+\lambda  u\)(\det\Theta').
 \end{equation}
\end{lemma}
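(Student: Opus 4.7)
The plan is to perform the change of variables from $\hat x \in \Omega_\theta$ to $x \in \Omega_0$ given by $\hat x = \Theta(x) = x + \theta(x)$, and then identify each side of the two identities by straightforward chain-rule computations. Since $\|\theta\|_{2,\alpha}$ is small, $\Theta$ is a $C^{3,\alpha}$-diffeomorphism with $\det \Theta' > 0$, so the Jacobian formula reads $d\hat x = (\det \Theta')\, dx$.

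The key ingredient for both identities is the chain rule. From $\hat u(\Theta(x)) = u(x)$ one differentiates in $x$ to obtain $\nabla_x u = (^t\Theta')\,(\nabla_{\hat x}\hat u) \circ \Theta$, whence
$$(\nabla_{\hat x}\hat u) \circ \Theta = (^t\Theta')^{-1}\nabla_x u, \qquad |\nabla_{\hat x}\hat u|^2 \circ \Theta = \nabla u \cdot \bigl[(\Theta')^{-1}(^t\Theta')^{-1}\nabla u\bigr].$$
Combining this with the Jacobian factor produces \eqref{f1} immediately: the gradient term of $Q$ transforms to the matrix-weighted gradient term on the right, while the nonlinear and linear terms transform trivially via $|\hat u|^{p+1}\circ\Theta = |u|^{p+1}$ and $\hat u^2 \circ \Theta = u^2$, each acquiring one factor of $\det \Theta'$ from the Jacobian.

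For \eqref{f2} I would argue by duality. Given an arbitrary test function $\phi \in C_c^\infty(\Omega_\theta)$ with pullback $\varphi := \phi \circ \Theta \in C_c^\infty(\Omega_0)$, integration by parts on $\Omega_\theta$ yields
$$\int_{\Omega_\theta}(\Delta_{\hat x}\hat u + |\hat u|^{p-1}\hat u + \lambda \hat u)\phi\, d\hat x = \int_{\Omega_\theta}\bigl(-\nabla_{\hat x}\hat u \cdot \nabla_{\hat x}\phi + (|\hat u|^{p-1}\hat u + \lambda \hat u)\phi\bigr)\, d\hat x.$$
Applying the change of variables and the gradient identity above (to both $\hat u$ and $\phi$), the right-hand side equals
$$\int_{\Omega_0}\Bigl\{-\nabla u \cdot \bigl[(\det\Theta')(\Theta')^{-1}(^t\Theta')^{-1}\nabla \varphi\bigr] + \bigl(|u|^{p-1}u + \lambda u\bigr)(\det\Theta')\,\varphi\Bigr\}\, dx.$$
A final integration by parts on $\Omega_0$ moves the matrix-weighted gradient off $\varphi$ onto $u$, and density of smooth test functions identifies the pullback of the left-hand differential operator of \eqref{f2} with the divergence-form operator on the right.

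Both steps are pure bookkeeping and no genuine obstacle arises. The only point demanding care is tracking the transposes in the conjugated matrix $(\Theta')^{-1}(^t\Theta')^{-1}$, which reflects the fact that $\Theta'$ pushes tangent vectors forward while $(^t\Theta')^{-1}$ pulls covectors such as $\nabla \hat u$ back; the identity $((^t\Theta')^{-1})^T = (\Theta')^{-1}$ makes the symmetric matrix $(\Theta')^{-1}(^t\Theta')^{-1}$ appear naturally from $|(^t\Theta')^{-1}\nabla u|^2$.
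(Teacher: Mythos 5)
Your proof is correct and follows exactly the standard change-of-variables computation that the paper omits and delegates to its references: chain rule $\nabla_x u={}^t\Theta'\,(\nabla_{\hat x}\hat u)\circ\Theta$ plus the Jacobian factor $d\hat x=(\det\Theta')\,dx$ for \eqref{f1}, and the weak-form/integration-by-parts identification for \eqref{f2}. Your duality step also correctly captures the intended reading of \eqref{f2}, namely that the right-hand side is $(\det\Theta')$ times the pointwise pullback of the left-hand side, which is exactly the representative of the equation with respect to the measure $dx$ used in the definition of $F$.
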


At this point it is useful to point out the following fact.
\begin{remark}\label{norm}
We can choose $\epsilon>0$ small enough so that for any $\theta\in \mathcal B_\epsilon$ 
$$\(\ \int\limits_{\Omega_0}\(\left| \left\langle(\det \Theta')(\Theta')^{-1}(^t\Theta')^{-1}\nabla u,\nabla u\right\rangle\right|^2
+\left|{\mathrm {div}}\left[(\det \Theta')(\Theta')^{-1}(^t\Theta')^{-1}\nabla u\right]\right|^2\)dx\)^{1/2}$$
defines on $H^1_0(\Omega_0)\cap H^2(\Omega_0)$ a norm which is equivalent to the standard one
$$\| u\|_{H^1_0\cap H^2(\Omega_0)}=\(\ \int\limits_{\Omega_0}\(|\nabla u|^2+|\Delta u|^2\)dx\)^{1/2}.$$
\end{remark}
 
 Next, we check the differentiability of $F$ (see \cite{ST,P}).
 
 \begin{lemma}
  The function $F$ is differentiable at any $(\lambda_0,u_0,\theta_0)\in U\times V$ such that $F(\lambda_0,u_0,\theta_0)=(0,0).$	
 Moreover if $\Theta_0=I+\theta_0$
\begin{equation}\label{f3}
\begin{aligned} &F'(\lambda_0,u_0,\theta_0)[\lambda,u]\\ &=\( \int\limits_{\Omega_0}\left\{2\nabla u_0\cdot\left[(\det \Theta_0')(\Theta_0')^{-1}(^t\Theta_0')^{-1}\nabla u\right]-\((p+1)|u_0|^{p-1}u_0+2\lambda _0 u_0\)u(\det \Theta_0')\right\}d x\right.\\ & \qquad -\lambda \int\limits_{\Omega_0}u_0^2(\det \Theta'_0)d x,\\
&\quad\left.\mathrm {div}\left[(\det \Theta_0')(\Theta_0')^{-1}(^t\Theta_0')^{-1}\nabla u\right]+\(p| u_0|^{p-1}  +\lambda_0  \)u(\det\Theta_0')+\lambda u_0(\det\Theta'_0),\)
\end{aligned}
 \end{equation}
 and if $\theta_0=0$
\begin{equation}\label{f4}
\begin{aligned} &F'(\lambda_0,u_0,\theta_0)[\theta]\\ &=\(\int\limits_{\Omega_0}\left\{\nabla u_0\cdot\left[(\mathrm {div}\theta)\nabla u_0-(\theta'+{}^t\theta')\nabla u_0\right]-\(|u_0|^{p+1}+ \lambda _0 u_0^2\)(\mathrm {div}\theta)\right\}d x\right.,\\
&\quad\left.{\mathrm {div}}\left[(\mathrm {div}\theta)\nabla u_0-(\theta'+{}^t\theta')\nabla u_0\right]+\(| u_0|^{p-1} u_0 +\lambda_0u_0  \)(\mathrm {div}\theta)\).
\end{aligned}
  \end{equation}
\end{lemma}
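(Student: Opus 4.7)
The strategy is to exploit the $x$-variable formulas (f1) and (f2), which move the $\theta$-dependence out of the domain of integration and concentrate it in the matrix $A(\theta) := (\det \Theta')(\Theta')^{-1}({}^t\Theta')^{-1}$ and the scalar $b(\theta) := \det \Theta'$, both acting as coefficients on the fixed domain $\Omega_0$. Since $\Theta'(x) = I + \theta'(x)$ is uniformly invertible on $\overline{\Omega_0}$ for $\theta \in \mathcal{B}_\varepsilon$, the entries of $A$ and $b$ are polynomial/rational functions of the entries of $\theta'$, so
\[\theta \mapsto A(\theta) \in C^{2,\alpha}(\overline{\Omega_0},\mathbb{R}^{n\times n}), \qquad \theta \mapsto b(\theta) \in C^{2,\alpha}(\overline{\Omega_0})\]
are of class $C^\infty$ on the Banach space $C^{3,\alpha}(\overline{D},\mathbb{R}^n)$. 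Computing $F'$ then reduces to differentiating smooth multilinear combinations of $u$, $\nabla u$, $\Delta u$, $A(\theta)$, and $b(\theta)$, plus the scalar nonlinearity $u \mapsto |u|^{p-1}u$.

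For formula (f3), I freeze $\theta = \theta_0$ and differentiate in $(\lambda,u)$. Linearity in $\lambda$ accounts for the terms $-\lambda\int u_0^2(\det\Theta_0')\,dx$ and $+\lambda u_0(\det\Theta_0')$. The quadratic-in-$u$ gradient term in $Q$ and the $\lambda u^2 b(\theta_0)$ term differentiate by bilinearity, yielding the factor $2$ in front of $\int\nabla u_0\cdot[A(\theta_0)\nabla u]\,dx$ and the factor $2\lambda_0 u_0 u\, b(\theta_0)$. The remaining contributions $(p+1)|u_0|^{p-1}u_0\, u$ and $p|u_0|^{p-1}u$ arise from the Fréchet derivatives of $\int|u|^{p+1}b(\theta_0)\,dx$ and of $|u|^{p-1}u\,b(\theta_0)$, respectively, so collecting the pieces gives (f3).

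For (f4), I set $\theta_0 = 0$ and expand $\Theta_t := I + t\theta$ to first order in $t$:
\[\det\Theta_t' = 1 + t\,\mathrm{div}\,\theta + O(t^2), \qquad (\Theta_t')^{-1} = I - t\theta' + O(t^2),\]
whence
\[A(t\theta) = I + t\bigl((\mathrm{div}\,\theta)I - \theta' - {}^t\theta'\bigr) + O(t^2), \qquad b(t\theta) = 1 + t\,\mathrm{div}\,\theta + O(t^2).\]
Substituting these expansions into (f1) and (f2) with $u = u_0$ and extracting the coefficient of $t$ reproduces (f4) verbatim; no integration by parts is needed, since the linear-in-$\theta$ terms are pulled directly out of the integrals.

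The main technical obstacle is the Fréchet differentiability (not merely Gateaux) of the nonlinearity $u\mapsto|u|^{p-1}u$ as a map $H^1_0(\Omega_0)\cap H^2(\Omega_0)\to L^2(\Omega_0)$ at $u_0$, with derivative $v\mapsto p|u_0|^{p-1}v$. This is precisely where the hypothesis $F(\lambda_0,u_0,\theta_0) = (0,0)$ is used: elliptic regularity promotes $u_0$ to a smooth solution, so $|u_0|^{p-1}$ is a bounded multiplier. The pointwise bound $\bigl||u_0+v|^{p-1}(u_0+v) - |u_0|^{p-1}u_0 - p|u_0|^{p-1}v\bigr| \le C(\|u_0\|_\infty)\,|v|^{\min(p,2)}$ combined with the Sobolev embedding $H^2(\Omega_0)\hookrightarrow L^q$ for $q$ large enough then closes the $L^2$ estimate on the remainder, and the rest of the derivation is routine bookkeeping via the product and chain rules.
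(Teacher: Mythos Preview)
Your argument is correct and follows the standard route: pull everything back to $\Omega_0$ via (f1)--(f2), differentiate the smooth coefficient maps $\theta\mapsto A(\theta)$, $\theta\mapsto b(\theta)$, and handle the scalar nonlinearity $u\mapsto|u|^{p-1}u$ by combining elliptic regularity for $u_0$ (which is where the hypothesis $F(\lambda_0,u_0,\theta_0)=(0,0)$ enters) with a pointwise remainder estimate and Sobolev embeddings. The first-order expansions of $\det\Theta_t'$ and $(\Theta_t')^{-1}$ at $\theta_0=0$ are exactly right and yield (f4) immediately.

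The paper itself does not supply a proof of this lemma; it merely states the formulas and refers the reader to \cite{ST,P}. Your write-up is precisely the computation those references carry out, so there is nothing to compare: you have filled in the omitted details in the expected way.
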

 
 Let us check assumption i) of Theorem \ref{tran}.
  \begin{lemma}
 For any $\theta\in V$ the function $F(\cdot,\cdot,\theta)$ is a Fredholm map from $U$ into $Z$ of index 0.
 \end{lemma}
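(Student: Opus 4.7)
My plan is to show that at every $(\lambda_0,u_0)\in U$ the Fr\'echet derivative
$L:=F'(\lambda_0,u_0,\theta)|_{X}:X\to Z$
can be written as $L=L_0+K$ with $L_0$ an isomorphism and $K$ compact; the standard ``isomorphism plus compact perturbation'' stability theorem then immediately yields that $L$ is Fredholm of index $0$, which is exactly the claim.

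For the isomorphism part I take
$$L_0(\lambda,u):=\bigl(\lambda,\,\mathrm{div}[A\nabla u]\bigr),\qquad A:=(\det\Theta')(\Theta')^{-1}({}^t\Theta')^{-1},$$
so that $A$ is symmetric, of class $C^{2,\alpha}$, and uniformly elliptic on $\overline{\Omega_0}$ as soon as $\epsilon$ is small enough. The first coordinate is trivially an isomorphism of $\mathbb R$. For the second, Remark \ref{norm} already tells us that $\|\mathrm{div}[A\nabla u]\|_{L^2}$ is equivalent to $\|u\|_{H^1_0\cap H^2}$, so $u\mapsto \mathrm{div}[A\nabla u]$ is continuous, injective, and has closed range; surjectivity onto $L^2(\Omega_0)$ follows by Lax--Milgram applied to the coercive bilinear form $\int_{\Omega_0}A\nabla u\cdot \nabla v\,dx$ on $H^1_0(\Omega_0)$, combined with standard $H^2$-elliptic regularity (valid since $A\in C^{1,\alpha}$).

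By formula \eqref{f3}, the residual $K=L-L_0$ has a first component which is a bounded linear functional on $X$ into $\mathbb R$, hence of finite rank, and a second component $(\lambda,u)\mapsto (p|u_0|^{p-1}+\lambda_0)u\det\Theta'+\lambda u_0\det\Theta'$ taking values in $L^2(\Omega_0)$. The $\lambda$-piece is rank one. The main (and essentially only) technical obstacle is the compactness of the zero-order multiplication $u\mapsto (p|u_0|^{p-1}+\lambda_0)u\det\Theta'$ from $H^1_0\cap H^2$ into $L^2(\Omega_0)$: this is a routine application of H\"older's inequality combined with the compact Rellich embeddings of $H^1_0\cap H^2(\Omega_0)$ into $L^q(\Omega_0)$ for every $q<2n/(n-4)$, the exponents fitting precisely because $p=(n+2)/(n-2)$ is the critical Sobolev exponent. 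With $K$ thus compact, $L=L_0+K$ is Fredholm of index $0$, which is the statement of the lemma.
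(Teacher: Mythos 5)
Your proof is correct and follows essentially the same route as the paper: the linearization in $(\lambda,u)$ is split as an isomorphism (identity-type map in the $\lambda$-slot, the elliptic operator $\mathrm{div}[A\nabla\cdot]$ with $A=(\det\Theta')(\Theta')^{-1}({}^t\Theta')^{-1}$ in the $u$-slot) plus a compact perturbation (finite-rank first component, zero-order multiplication terms handled by H\"older and the compact embedding of $H^1_0\cap H^2$ into subcritical $L^q$). The only difference is cosmetic — the paper puts $-\lambda\int_{\Omega_0}u_0^2\det\Theta_0'\,dx$ rather than $\lambda$ itself into the isomorphism's first component — and you supply the Lax--Milgram/elliptic-regularity and Rellich details that the paper leaves implicit.
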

 \begin{proof}
 The partial derivative $F'_{\lambda,u}(\lambda_0,u_0,\theta_0):X\to Z$ is the sum of   an isomorphism $\mathcal  I$  and a compact perturbation $\mathcal  K,$ namely
$$
  \mathcal I(\lambda,u):=
\(   -\lambda \int\limits_{\Omega_0}u_0^2(\det \Theta'_0)d x,\mathrm {div}\left[(\det \Theta_0')(\Theta_0')^{-1}(^t\Theta_0')^{-1}\nabla u\right]\)$$
and
$$\begin{aligned}
  \mathcal K(\lambda,u ):=&\( \int\limits_{\Omega_0}\left\{2\nabla u_0\cdot\left[(\det \Theta_0')(\Theta_0')^{-1}(^t\Theta_0')^{-1}\nabla u\right]-\((p+1)|u_0|^{p-1}u_0+2\lambda _0 u_0\)u(\det \Theta_0')\right\}d x\right.,\\
  &\left. \(p| u_0|^{p-1}  +\lambda_0  \)u(\det\Theta_0')+\lambda u_0(\det\Theta'_0)\).\end{aligned}
$$
 \end{proof}

 Let us check assumption iii) of Theorem \ref{tran}.
 \begin{lemma} The map $\pi\circ i: F^{-1}(0)\to Y$ is $\sigma-$proper.
 \end{lemma}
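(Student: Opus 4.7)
The plan is to exhaust $F^{-1}(0)$ by a countable family of closed subsets on which the projection $\pi\circ i$ restricts to a proper map. I would define, for each integer $s\ge 1$,
\begin{equation*}
C_s:=\Big\{(\lambda,u,\theta)\in F^{-1}(0)\,:\, \tfrac1s\le\lambda\le s,\ \tfrac1s\le\|u\|_{H^1_0\cap H^2(\Omega_0)}\le s,\ \|\theta\|_{2,\alpha}\le\epsilon\bigl(1-\tfrac1s\bigr)\Big\}.
\end{equation*}
The identity $F^{-1}(0)=\bigcup_{s\ge1}C_s$ is immediate since every solution triple has $\lambda>0$, $u\ne 0$ (by the definition of $U$), and $\theta$ in the open ball $\mathcal{B}_\epsilon$. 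Each $C_s$ is closed in $X\times Y$ because the defining inequalities are closed conditions in the relevant Banach norms and $F^{-1}(0)$ itself is closed by continuity of $F$.

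The core step is proving that $\pi\circ i|_{C_s}$ is proper. Given $(\lambda_n,u_n,\theta_n)\in C_s$ with $\theta_n\to\theta_*$ in $C^{3,\alpha}(\overline D,\mathbb R^n)$, I would first extract a subsequence with $\lambda_n\to\lambda_*\in[\tfrac1s,s]$ and then, using $\|u_n\|_{H^1_0\cap H^2(\Omega_0)}\le s$, a weakly convergent subsequence $u_n\rightharpoonup u_*$ in $H^2(\Omega_0)$. Since $n=6$, the Sobolev embedding $H^2(\Omega_0)\hookrightarrow L^q(\Omega_0)$ is compact for every $q<6$, so $u_n\to u_*$ strongly in $L^4$; combined with the elementary bound $\big||u_n|u_n-|u_*|u_*\big|\le C(|u_n|+|u_*|)|u_n-u_*|$ and H\"older's inequality in $L^4\times L^4$, this yields $|u_n|u_n\to|u_*|u_*$ strongly in $L^2(\Omega_0)$.

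Because $\theta_n\to\theta_*$ in $C^{3,\alpha}$, the matrix coefficients $(\det\Theta_n')(\Theta_n')^{-1}({}^t\Theta_n')^{-1}$ from \eqref{f1}-\eqref{f2} converge in $C^{2,\alpha}(\overline{\Omega_0})$, and passing to the limit in $F(\lambda_n,u_n,\theta_n)=0$ produces $F(\lambda_*,u_*,\theta_*)=0$. To upgrade the weak $H^2$-convergence to strong, I would subtract the two equations and view $u_n-u_*$ as the solution of a second-order linear elliptic problem, uniformly elliptic thanks to the $C^{2,\alpha}$-smallness of $\theta_n$, whose right-hand side now tends to $0$ in $L^2(\Omega_0)$. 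Standard $H^2$-elliptic regularity, read off the equivalent norm provided by Remark \ref{norm}, then forces $u_n\to u_*$ in $H^1_0(\Omega_0)\cap H^2(\Omega_0)$, and the closed inequalities defining $C_s$ pass to the limit, so $(\lambda_*,u_*,\theta_*)\in C_s$.

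The main obstacle I expect is the passage to the limit in the critical nonlinearity: a bare $H^1$-bound would leave room for concentration and Struwe-type defect measures, but the ambient $H^2$-setting gains just enough extra integrability in dimension six (through the compact embedding $H^2\hookrightarrow L^q$, $q<6$) to bypass this altogether and yield strong convergence of $|u_n|u_n$ directly. Everything else is routine given the linear structure of $F$ in $u$ and the smooth dependence of the coefficients on $\theta$.
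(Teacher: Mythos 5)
Your proof follows essentially the same route as the paper: the identical exhaustion of $F^{-1}(0)$ by the sets $C_s$, weak $H^1_0\cap H^2$ compactness plus strong $L^q$ convergence to pass to the limit in the equation, and an upgrade to strong convergence that ultimately rests on the equivalent norm of Remark \ref{norm}. The only differences are cosmetic: you specialize the Sobolev embeddings to $n=6$ (the paper treats general $n$, distinguishing $n=3,4$ from $n\ge5$) and phrase the final step as an elliptic estimate for $u_n-u_*$ rather than the paper's direct computation of $\int\langle A\nabla(u_k-u),\nabla(u_k-u)\rangle$ and $\int(\mathrm{div}(A\nabla(u_k-u)))^2$, which is the same mechanism.
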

 \begin{proof}
Let us write
$$F^{-1}(0,0)=\cup_{m=1}^\infty \mathcal  C_m,\ \mathcal  C_m=\(A_m\times B_m\times C_m\)\cap F^{-1}(0,0),$$
 where 
 $$A_m:=\left\{\frac1m\le \lambda\le m\right\},$$
 $$\begin{aligned}B_m:&=\left\{u\in {H^1_0(\Omega_0)\cap H^2(\Omega_0)}\ :\ \frac1m\le \|u\|:=\(\ \int\limits_{\Omega_0}\(|\nabla u|^2+(\Delta u)^2\)dx\)^{\frac12}\le m
\right\}\end{aligned}$$
 and
 $$C_m:= \left\{\theta\in C^{3,\alpha}(\Omega_0)\ :\ \|\theta\|_{2,\alpha}\le \epsilon\(1-\frac1m\)\right\}. $$
 Let us fix $m$. We have to prove that if $(\theta_k)_{k\ge1}\subset C_m$ with $\theta_k\to\theta$ and $(\lambda_k,u_k)_{k\ge1}\subset A_m\times B_m$ is such that
 $F(\lambda_k,u_k,\theta_k)=0$ then, up to a subsequence, $(\lambda_k,u_k)\to(\lambda,u)\in A_m\times B_m$ and $F(\lambda,u,\theta)=0.$
 First of all, up to a subsequence, we have $\lambda_k\to\lambda\in A_m$ and $u_k\to u$ weakly in $H^1_0(\Omega_0)\cap H^2(\Omega_0)$ and strongly in $L^q(\Omega_0)$ for any $q>1$ if $n=3,4$ and $1<q<{2n\over n-4}$ if $n\ge5.$ 
 If $\Theta_k=I+\theta_k$ we know that $\Theta_k\to \Theta:=I+\theta$ in $ C^{1,\alpha}(\Omega_0,\mathbb R^n).$ Now, condition   $F(\lambda_k,u_k,\theta_k)=0$ reads as
 $${\mathrm {div}}\(\underbrace{(\det \Theta_k')(\Theta_k')^{-1}(^t\Theta_k')^{-1}}_{=A_k}\nabla u_k\)+\underbrace{\(| u_k|^{p-1}  u_k+\lambda _k u_k\) (\det\Theta_k')}_{=f_k}=0\ \hbox{in}\ \Omega_0,\ u=0\ \hbox{on}\ \partial\Omega_0.$$
In particular, for any $ \varphi\in H^1_0(\Omega_0)$
\begin{equation}\label{comp1}\int\limits_{\Omega_0}\left[\left\langle A_k\nabla u_k,\nabla \varphi\right\rangle+f_k\varphi\right]dx=0\end{equation}
 and so passing to the limit 
 \begin{equation}\label{comp2}\int\limits_{\Omega_0}\left[\left\langle \underbrace{(\det \Theta')(\Theta')^{-1}(^t\Theta')^{-1}}_{=A}\nabla u,\nabla \varphi\right\rangle-\underbrace{\(| u|^{p-1}  u+\lambda u\)(\det\Theta'}_{=f})\varphi\right]dx=0,\end{equation}
namely
$${\mathrm {div}}\(A\nabla u\)+f=0\ \hbox{in}\ \Omega_0,\ u=0\ \hbox{on}\ \partial\Omega_0,$$
i.e. $F(\lambda,u,\theta)=0.$\\
Now, let us prove that $u_k\to u$ strongly in $H^1_0(\Omega_0)\cap H^2(\Omega_0).$ 
By \eqref{comp1}  and \eqref{comp2}   we deduce
$$\begin{aligned}\int\limits_{\Omega_0} \left\langle A\nabla (u_k-u),\nabla (u_k-u)\right\rangle&=\int\limits_{\Omega_0} \left\langle  A\nabla u_k, \nabla u_k\right\rangle+ \int\limits_{\Omega_0} \left\langle  A\nabla u , \nabla u \right\rangle-2\int\limits_{\Omega_0} \left\langle  A\nabla u , \nabla u_k\right\rangle\\
&=\int\limits_{\Omega_0} \left\langle  (A-A_k)\nabla u_k, \nabla u_k\right\rangle+ \int\limits_{\Omega_0}\(-f_ku_k-fu+2fu_k\)\\
&=o(1), 
\end{aligned}$$
because $A_k\to A$ in $C^0(\Omega_0)$ and $u_k\to u$ strongly in $L^{2n\over n-2}(\Omega_0).$  
Moreover, we also have
$$\begin{aligned}\int\limits_{\Omega_0}\({\mathrm {div}}\(A\nabla (u_k-u)\)\)^2&=\int\limits_{\Omega_0}\({\mathrm {div}}\((A-A_k)\nabla  u_k\)-f_k+f \)^2\\
&\le 2\int\limits_{\Omega_0}\({\mathrm {div}}\((A-A_k)\nabla  u_k\)\)^2+2\int\limits_{\Omega_0}\(f_k-f \)^2\\
&=o(1), \end{aligned}$$
because $A_k\to A$ in $C^0(\Omega_0)$ and $u_k\to u$ strongly in $L^{2(n+2)\over n-2}(\Omega_0).$  
Then the claim follows directly from Remark \ref{norm}.
\end{proof}

 Let us check assumption ii) of Theorem \ref{tran}. 
 \begin{proposition}$(0,0)$ is a regular value of $F.$

 \end{proposition}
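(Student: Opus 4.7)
The plan is a standard duality argument: fix $(\lambda_0,u_0,\theta_0)\in F^{-1}(0,0)$, assume that a pair $(c,\psi)\in\mathbb R\times L^2(\Omega_0)$ annihilates the range of $F'(\lambda_0,u_0,\theta_0)$, and deduce that $(c,\psi)$ must be trivial. A preliminary change of variable by $\Theta_0=I+\theta_0$ lets me assume $\theta_0=0$, so that the derivative formulas \eqref{f3}--\eqref{f4} apply directly and $\Omega_{\theta_0}=\Omega_0$.

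First I would exploit orthogonality against variations in the $(\lambda,u)$ slot with $\theta=0$. Testing against arbitrary $u\in H^1_0(\Omega_0)\cap H^2(\Omega_0)$ in \eqref{f3} and integrating by parts twice (legal since $u=u_0=0$ on $\partial\Omega_0$) shows that $\psi$ is a weak, and by elliptic regularity strong, solution of
\[
\Delta\psi+(p|u_0|^{p-1}+\lambda_0)\psi=c(p-1)|u_0|^{p-1}u_0\ \text{in }\Omega_0,\qquad \psi=0\ \text{on }\partial\Omega_0.
\]
Varying $\lambda$ independently yields the scalar compatibility $\int_{\Omega_0}\psi u_0\,dx=c\int_{\Omega_0}u_0^2\,dx$.

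The main step, and the hardest, is to convert variations in $\theta$ into a boundary condition via a Pohozaev--Rellich identity. Using \eqref{f4}, I would carry out several integrations by parts on $cF'[\theta]_1+\int_{\Omega_0}\psi F'[\theta]_2\,dx$, pushing derivatives off $\theta$ onto $u_0$ and $\psi$, then substituting the equations for $u_0$ and $\psi$; since $\nabla u_0$ and $\nabla\psi$ are purely normal on $\partial\Omega_0$, the systematic cancellation of bulk terms should collapse the pairing to the single boundary integral
\[
\int_{\partial\Omega_0}(\partial_\nu u_0)\bigl(\partial_\nu\psi-c\,\partial_\nu u_0\bigr)(\theta\cdot\nu)\,d\sigma.
\]
Requiring this to vanish for every admissible $\theta$, whose normal trace on $\partial\Omega_0$ sweeps out a dense subset of $C^0(\partial\Omega_0)$, forces the pointwise identity $(\partial_\nu u_0)(\partial_\nu\psi-c\partial_\nu u_0)\equiv 0$ on $\partial\Omega_0$. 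This Pohozaev--Rellich bookkeeping, with its many cross terms all required to pair up and vanish using both PDEs, is the main obstacle in the proof.

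I would finish by invoking Aronszajn's unique continuation twice. Since $u_0\not\equiv 0$ and vanishes on $\partial\Omega_0$, $\partial_\nu u_0$ cannot vanish on any relatively open portion of $\partial\Omega_0$ (otherwise $u_0$ would have zero Cauchy data somewhere and vanish identically); hence $O:=\{\partial_\nu u_0\neq 0\}$ is open and dense in $\partial\Omega_0$, and on $O$ we have $\partial_\nu\psi=c\,\partial_\nu u_0$. Setting $\Psi:=\psi-cu_0$, a short computation using $\Delta u_0=-|u_0|^{p-1}u_0-\lambda_0 u_0$ gives $\Delta\Psi+(p|u_0|^{p-1}+\lambda_0)\Psi=0$ in $\Omega_0$ with $\Psi=\partial_\nu\Psi=0$ on $O$, and a second application of Aronszajn forces $\Psi\equiv 0$, i.e.\ $\psi=cu_0$. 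The residual freedom $(c,cu_0)$ lies exactly along the Pohozaev/Nehari direction, which is tangent to $F^{-1}(0,0)$ because $Q$ vanishes identically on solutions, so it is absorbed by the transversality framework of Theorem~\ref{tran} and $(0,0)$ is a regular value of $F$ in the required sense.
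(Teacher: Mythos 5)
Your reduction steps are essentially right and run parallel to the paper's computation: testing against $(\lambda,u)$-variations does give $\Delta\psi+(p|u_0|^{p-1}+\lambda_0)\psi=c(p-1)|u_0|^{p-1}u_0$ with $\psi=0$ on $\partial\Omega_0$ and $\int_{\Omega_0}\psi u_0=c\int_{\Omega_0}u_0^2$, the $\theta$-pairing does collapse to $\int_{\partial\Omega_0}(\partial_\nu u_0)\bigl(\partial_\nu\psi-c\,\partial_\nu u_0\bigr)(\theta\cdot\nu)\,d\sigma$, and unique continuation then yields $\psi=cu_0$. The genuine gap is the last sentence. A nonzero pair annihilating the whole range of $F'$ is exactly the negation of surjectivity onto $Z=\mathbb R\times L^2(\Omega_0)$, which is what Theorem \ref{tran}(ii) demands; it cannot be ``absorbed by the transversality framework''. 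Moreover your residual direction is not removable by a cleverer argument: since $Q(\lambda,\hat u,\theta)=-\int_{\Omega_\theta}\bigl(\Delta_{\hat x}\hat u+|\hat u|^{p-1}\hat u+\lambda\hat u\bigr)\hat u\,d\hat x$ identically, differentiating at a zero gives $F_1'[w]+\int_{\Omega_0}F_2'[w]\,u_0\,dx=0$ for every direction $w$ (in $\lambda$, $u$ and $\theta$ alike), so $(1,u_0)$ really does annihilate the full range — your own cancellation of the bulk terms is a manifestation of this. Hence the dual scheme, as you set it up, can at best prove surjectivity onto the hyperplane $\{(a,v):a+\int_{\Omega_0}v\,u_0\,dx=0\}$, and turning that into the stated proposition would require modifying the functional setting (e.g. quotienting out this direction or reformulating the scalar component), not just bookkeeping.

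This is also where you diverge from the paper, which does not argue by duality with a cokernel element $(c,\psi)$. The paper verifies the primal condition \eqref{f5}: it takes $(\lambda,u)$ in the kernel of the partial derivative $F'_{(\lambda,u)}$ — so $u$ solves the linearized equation \eqref{f7} with source $\lambda u_0$ and the scalar constraint — and assumes orthogonality to all $\theta$-derivatives \eqref{f8}. In that bookkeeping the interior terms do \emph{not} cancel: \eqref{f9} leaves the bulk term $\lambda\int_{\Omega_0}\bigl((p-1)|u_0|^{p-1}u_0+u_0\bigr)\theta\cdot\nabla u_0\,dx$ in addition to the boundary term. Deformations with $\theta=0$ on $\partial\Omega_0$ plus unique continuation (the zero sets of $u_0$ and $\nabla u_0$ have measure zero) first force $\lambda=0$, and only then does the boundary term $\int_{\partial\Omega_0}\theta\cdot\nabla u_0\,\partial_\nu u\,d\sigma$, handled as in Saut--Temam, give $u\equiv0$. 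In your dual version the analogous bulk contribution cancels exactly because the right-hand side of the $\psi$-equation is $c\bigl(g'(u_0)u_0-g(u_0)\bigr)$, which is why you land on the irreducible direction $\psi=cu_0$ instead of a contradiction. So the route is genuinely different from the paper's, and as written it does not prove the statement.
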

 \begin{proof}
 Let $(\lambda_0,u_0,\theta_0)\in U\times V$ such that $F(\lambda_0,u_0,\theta_0)=(0,0).$ We shall prove that 
 if $(\lambda,u)\in X$ is such that
 \begin{equation}\label{f5}
\left\{ \begin{aligned}&F'(\lambda_0,u_0,\theta_0)[\lambda,u]=0\\
 &\langle F'(\lambda_0,u_0,\theta_0)[\theta],(\lambda,u)\rangle_Z=0\ \hbox{for any}\ \theta\in Y\end{aligned}\right.\ \Rightarrow
\ \lambda=0\ \hbox{and}\ u\equiv0. \end{equation}
Without loss of generality we can assume $\theta_0=0.$ Then  $\Theta_0=I$ and by \eqref{f1} and \eqref{f2} condition $F(\lambda_0,u_0,\theta_0)=(0,0)$ reads as  
\begin{equation}\label{f6}
  \left\{\begin{aligned}&\int\limits_{\Omega_0}\(|\nabla u_0|^2-|u_0|^{p+1}-\lambda_0  u_0^2\) d x=0\\
&\Delta   u_0+|u_0|^{p-1}u_0+\lambda_0u_0=0\ \hbox{in}\ \Omega_0,\ u=0\ \hbox{on}\ \partial\Omega_0.
 \end{aligned}\right.\end{equation}
 Moreover by \eqref{f3} and \eqref{f4} condition \eqref{f5} can be rephrased as
\begin{equation}\label{f7}\left\{
\begin{aligned}
 &
   \int\limits_{\Omega_0}\left\{2\nabla u_0 \nabla u -\((p+1)|u_0|^{p-1}u_0+2\lambda_0  u_0\)u  -\lambda  u_0^2 \right\}d x=0\\
   & \Delta u+\(p| u_0|^{p-1}  +\lambda_0  \)u +\lambda u_0 =0\ \hbox{in}\ \Omega_0,\ u=0\ \hbox{on}\ \partial\Omega_0
   \end{aligned}\right.\end{equation}
  and
  \begin{equation}\label{f8}
  \begin{aligned}
 &\lambda \int\limits_{\Omega_0}\left\{\nabla u_0\cdot\left[(\mathrm {div}\theta)\nabla u_0-(\theta'+{}^t\theta')\nabla u_0\right]-\(|u_0|^{p+1}+ \lambda _0 u_0^2\)(\mathrm {div}\theta)\right\}d x\\
&+\int\limits_{\Omega_0}\left\{{\mathrm {div}}
\left[(\mathrm {div}\theta)\nabla u_0-(\theta'+{}^t\theta')\nabla u_0\right]+\(| u_0|^{p-1} u_0 +\lambda_0u_0  \)(\mathrm {div}\theta)\right\}udx=0 \ \forall\ \theta\in Y.
\end{aligned}  
  \end{equation} 
 We can simplify expression \eqref{f8}. Indeed, taking into account that
\begin{equation}\label{f10}\Delta u_0+\underbrace{| u_0|^{p-1} u_0 +\lambda_0u_0}_{=g(u_0)}=0\ \hbox{in}\ \Omega_0,\ u=0\ \hbox{on}\ \partial\Omega_0,
  \end{equation}
 we have 
  $$\begin{aligned}\mathrm {div}\left[(\mathrm {div}\theta)\nabla u_0-(\theta'+{}^t\theta')\nabla u_0\right]&=\mathrm {div}(\theta\Delta u_0)-\Delta (\theta\nabla u_0)=-\mathrm {div}(g(u_0)\theta)-\Delta (\theta\nabla u_0)\\ &=-g(u_0)(\mathrm {div}\theta)-g'(u_0)\nabla u_0\theta-\Delta (\theta\nabla u_0).\end{aligned}$$
Moreover,
  $$\int\limits_{\Omega_0}\Delta (\theta\nabla u_0)udx=-\int\limits_{\partial\Omega_0} \theta\nabla u_0\partial_\nu udx+\int\limits_{\Omega_0} \theta\nabla u_0\Delta udx$$
Therefore, \eqref{f8} reads as
\begin{equation}\label{f9}
  \begin{aligned}
0= &\lambda \int\limits_{\Omega_0}\left\{\left[g(u_0) u_0(\mathrm {div}\theta)+g'(u_0)u_0\nabla u_0\theta+\theta\nabla u_0\underbrace{\Delta u_0}_{=-g(u_0)}\right]-\underbrace{\(|u_0|^{p+1}+ \lambda _0 u_0^2\)}_{=g(u_0)u_0}(\mathrm {div}\theta)\right\}d x\\ &-\lambda \int\limits_{\partial\Omega_0}\theta\nabla u_0\partial_\nu u_0dx\\
&+\int\limits_{\Omega_0}\left\{\left[-g(u_0)u(\mathrm {div}\theta)-g'(u_0)u\nabla u_0\theta-\theta\nabla u_0\underbrace{\Delta u}_{=-g'(u_0)u-\lambda u_0}\right]+\underbrace{\(| u_0|^{p-1} u_0 +\lambda_0u_0  \)}_{=g(u_0)}(\mathrm {div}\theta)u\right\} dx\\ &+  \int\limits_{\partial\Omega_0}\theta\nabla u_0\partial_\nu udx\\ &
=\lambda \int\limits_{\Omega_0}\underbrace{\(g'(u_0)u_0-g(u_0)
+u_0\)}_{= (p-1)|u_0|^{p-1}u_0+u_0}\theta\nabla u_0dx +  \int\limits_{\partial\Omega_0}\theta\nabla u_0\(\partial_\nu u
-\lambda \partial_\nu u_0\)dx.
\end{aligned}  
  \end{equation} 

Now, we prove that $\lambda=0$. Indeed by taking deformations $\theta$ which take fix the boundary of $\Omega_0$ by \eqref{f9} we get
$$\lambda \int\limits_{\Omega_0} \left[ (p-1)|u_0|^{p-1}u_0+u_0\right]\theta\nabla u_0dx=0\ \hbox{for any}\ \theta\in V,\ \theta=0\ \hbox{on}\ \partial\Omega_0.$$
If $\lambda\not=0$ then we necessarily have
$$  u_0\left[ (p-1)|u_0|^{p-1}+1\right]\nabla u_0=0\ \hbox{a.e. in}\ \Omega_0,$$
and so $u_0\nabla u_0=0$ a.e. in $\Omega.$ This is not possible because $u_0$ solves \eqref{f10} and by the unique continuation theorem in \cite{AKS} we know
that $\mathrm {meas}\{x\in\Omega_0\ :\ u_0(x)=0\}=\mathrm {meas}\{x\in\Omega_0\ :\ \nabla u_0(x)=0\}=0.$\\

Since $\lambda=0$ by \eqref{f9} we deduce that
$$ \int\limits_{\partial\Omega_0}\theta\nabla u_0\partial_\nu u
dx =0\ \hbox{for any}\ \theta\in Y$$
and arguing exactly as in \cite{ST}, page 313-314, we deduce that $u=0.$ 
That concludes  the proof.
 \end{proof}

\begin{proposition}\label{least}
For any  $\theta\in \Xi$ as in \eqref{non-de} there exists $\lambda_\theta\in (0,\lambda_1(\Omega_\theta))$ such that
\begin{equation}\label{cru}\lambda_\theta=2\max\limits_{\Omega_\theta} u_{\lambda_\theta}.\end{equation}
\end{proposition}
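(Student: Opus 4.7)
The plan is to define the continuous function $\psi(\lambda):=\lambda-2\max_{\Omega_\theta}u_\lambda$ on the interval $(0,\lambda_1(\Omega_\theta))$, show that $\psi(\lambda)\to\lambda_1(\Omega_\theta)>0$ as $\lambda\to\lambda_1(\Omega_\theta)^-$ and $\psi(\lambda)\to-\infty$ as $\lambda\to 0^+$, and then invoke the intermediate value theorem to obtain $\lambda_\theta$ with $\psi(\lambda_\theta)=0$, which is exactly \eqref{cru}. The key ingredient is a continuous selection $\lambda\mapsto u_\lambda$ of least energy positive solutions, for which the non-degeneracy assumption $\theta\in\Xi$ is essential.

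First I would construct the branch of least energy solutions. For $\theta\in\Xi$ every positive solution of \eqref{1theta} is non-degenerate, so the implicit function theorem applied to the operator $F(\cdot,\cdot,\theta)$ from Section~\ref{gen} produces, in a neighbourhood of each pair $(\lambda_0,u_{\lambda_0})$, a unique smooth one-parameter family of positive solutions. To globalise this, I would take $\lambda_k\to\lambda_0\in(0,\lambda_1(\Omega_\theta))$ and exploit that the least energy level lies strictly below the critical Sobolev threshold for all $\lambda\in(0,\lambda_1(\Omega_\theta))$ in dimension six; this rules out bubble concentration along minimising sequences and forces strong $H^1_0$-compactness of $(u_{\lambda_k})$. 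The strong limit $u^*$ is a positive solution of minimal energy, hence a least energy positive solution for $\lambda_0$, and by non-degeneracy it must coincide with the implicit function theorem branch through $u_{\lambda_0}$. Piecing these local branches together by connectedness of $(0,\lambda_1(\Omega_\theta))$ yields a continuous (in fact $C^1$) selection $\lambda\mapsto u_\lambda$ on the whole interval, so $\psi$ is continuous.

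Next I would analyse the two endpoint limits. As $\lambda\to\lambda_1(\Omega_\theta)^-$, testing the Rayleigh-Nehari infimum at $t\phi_1$, with $\phi_1$ the positive first eigenfunction of $-\Delta$ on $\Omega_\theta$, gives a least energy level of order $(\lambda_1(\Omega_\theta)-\lambda)^3$; this forces $\|u_\lambda\|_{H^1_0}\to 0$, and a Brezis-Kato/Moser bootstrap on $-\Delta u_\lambda=u_\lambda^2+\lambda u_\lambda$ upgrades the decay to $\|u_\lambda\|_{L^\infty}\to 0$, so $\psi(\lambda)\to\lambda_1(\Omega_\theta)>0$. As $\lambda\to 0^+$, the classical asymptotic analysis of Rey and Han for the Brezis-Nirenberg problem in dimension six shows that the least energy positive solution concentrates at a critical point of the Robin function in the shape of a rescaled bubble $U_{\delta_\lambda,\xi_\lambda}$ with $\delta_\lambda\to 0$; in particular $\max u_\lambda\sim\alpha_6/\delta_\lambda\to+\infty$, so $\psi(\lambda)\to-\infty$. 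The intermediate value theorem applied to the continuous function $\psi$ then concludes the proof.

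The hardest step is the global continuous selection of least energy solutions: ensuring that the local implicit function theorem branches glue into a continuous function on all of $(0,\lambda_1(\Omega_\theta))$. The potential obstacle is that, at some $\lambda$, two distinct connected components of the positive solution set might share the same least energy, allowing the selected branch to jump. This is excluded by combining non-degeneracy from $\theta\in\Xi$, which makes positive solutions locally isolated and transverse in $\lambda$, with the strict subcriticality of the least energy level, which rules out loss of compactness along minimising sequences; together these properties force subsequential limits of $u_{\lambda_k}$ to be unique and to match the local implicit function theorem branch.
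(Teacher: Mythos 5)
Your proposal is correct and follows essentially the same route as the paper: use non-degeneracy (from $\theta\in\Xi$) and the implicit function theorem to get a continuous branch $\lambda\mapsto u_\lambda$ of least energy positive solutions, then apply the intermediate value theorem to $\lambda-2\max_{\Omega_\theta}u_\lambda$, using $\max u_\lambda\to+\infty$ as $\lambda\to0^+$ (the paper cites Han) and $\max u_\lambda\to0$ as $\lambda\to\lambda_1(\Omega_\theta)^-$ (the paper cites classical bifurcation theory). The only difference is that you spell out the compactness/gluing and endpoint asymptotics that the paper handles by citation, which if anything makes the argument more self-contained.
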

\begin{proof} 
 Let  $\theta\in \Xi$ as in \eqref{non-de} and let us consider the perturbed domain $\Omega_\theta$. For any  $\lambda\in (0,\lambda_1(\Omega_\theta))$ let $u_{\lambda}$ be the least energy positive solution on the domain $\Omega_\theta$, which  is non-degenerate because of Theorem \ref{main}.
 Therefore, by the Implicit function Theorem we deduce that there exists a continuous curve $\lambda\to u_\lambda$
Let us consider the continuous function
$$f(\lambda):=\lambda-2\|u _\lambda\|_{L^{\infty}(\Omega_\theta)},\ \lambda\in (0,\lambda_1(\Omega_\theta)).$$
Since
$$\lim\limits_{\lambda\to0}\|u _\lambda|_{L^{\infty}(\Omega_\theta)}=+\infty\ \hbox{and}\ \lim\limits_{\lambda\to\lambda_1(\Omega_\theta)}\|u _\lambda\|_{L^{\infty}(\Omega_\theta)}=0$$
  (see \cite{Han91} and  the classical bifurcation theory, respectively),
there exists $\lambda_\theta$ such that $f(\lambda_\theta)=0$ and the claim \eqref{cru} follows.
\end{proof}
 
 \begin{proof}[\bf Proof of Theorem \ref{main-generico}.] It follows immediately by Theorem \ref{main} and Proposition \ref{least}.
 \end{proof}

\bibliographystyle{abbrv}
\bibliography{n=6_final}
 
\end{document}